 \newtheorem{problem}{\bf Problem}
 \newtheorem{theorem}{\bf Theorem}
 \newtheorem{definition}{\textbf{Definition}}
 \newtheorem{result}{\bf Result}
 \newtheorem{lemma}{\bf Lemma}
 \newtheorem{example}{\bf Example}
 \title{\textbf{Solution of network localization problem with noisy distances and its convergence}}
 \author{Ananya Saha\hspace{1cm} Buddhadeb Sau}
 \date{}
\begin{document}
\maketitle 
\graphicspath{{figures/},{simulations/}}

\begin{abstract}
The \textit{network localization problem} with convex and non-convex distance constraints may be 
modeled as a nonlinear optimization problem. The existing localization techniques are mainly based 
on convex optimization. In those techniques, the non-convex distance constraints are either ignored 
or relaxed into convex constraints for using the convex optimization methods like SDP, least square 
approximation, etc.. We propose a method to solve the nonlinear non-convex network localization 
problem with noisy distance measurements without any modification of constraints in the general 
model. We use the nonlinear Lagrangian technique for non-convex optimization to convert the problem 
to a root finding problem of a single variable continuous function. This problem is then solved 
using an iterative method. However, in each step of the iteration the computation of the functional 
value involves a finite mini-max problem (FMX). We use smoothing gradient method to fix the FMX 
problem. We also prove that the solution obtained from the proposed iterative method converges to 
the actual solution of the general localization problem. The proposed method obtains the solutions 
with a desired label of accuracy in real time. 
\end{abstract}

\paragraph{Keyword}
Network localization technique, Localization with non-convex distances constraints, Localization 
with noisy distances, Applications of Lagrange optimization in localization, Mini-max optimization 
problem, Non-convex optimization.

\section{Introduction}
\label{S1}
In recent technological advances, sensor networks are being adopted for collecting data from 
different hostile environments and monitoring them (Figure~\ref{fig:network}). A network may 
consists of sensor nodes, RFID readers, or members in a rescue team in a disaster management system, 
etc.
\begin{figure}[ht]
  \centering
  \includegraphics[width=3.5in]{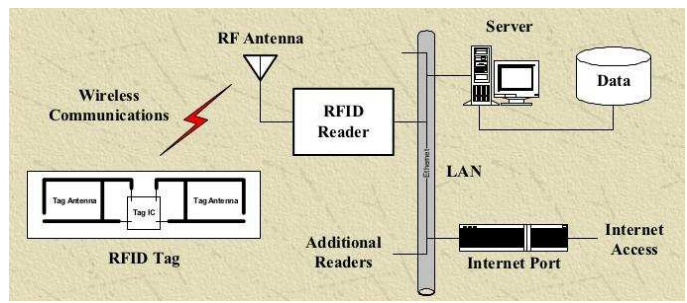}
  \caption{How does a network works}
    \label{fig:network}
\end{figure}
Air pollution monitoring~\cite{KRA10}, forest fire detection~\cite{MM07}, landslide 
detection~\cite{MVR9}, water quality detection~\cite{FDM5}, natural disasters prevention are some 
familiar field of applications in which sensor networks are useful. When a network is deployed in 
some region, the sensor nodes identify the events within their sensing ranges and transmit the 
collected information to the nodes within their communication ranges (a node within communication 
range of another node is called a \textit{neighbor}). The location of an event can naively be 
estimated by the positions of the nodes identifying the event. Thus knowing the locations of the 
nodes are essential for properly monitoring the events. The objective of the \textit{network 
localization} is to determine the node locations of a network using available distance information.

The \textit{GPS} (Global Positioning System)~\cite{BFJ12} installation with each node of a network 
for finding its location is costly. Therefore the localization technique without using GPS needs 
research focus. Many researchers have proposed novel algorithms so far, for finding node positions 
using the information available from neighboring nodes. Distance measurements among neighboring 
nodes are popularly used for computing the node positions. These distances are measured by 
instruments embedded inside the nodes. It is practically difficult to exactly measure the distances 
among nodes even with existing sophisticated hard-wares. Thus the network localization with noisy 
distance measurements demands rigorous research. In the literature, there are several algorithms 
based on exact~\cite{JS79, AGY4} as well as noisy distances~\cite{DPG1, PKY8}. Graph rigidity 
theory~\cite{JS79} and optimization theory are popularly used by researchers for developing 
localization algorithms~\cite{DPG1, PKY8}.

Any network may be represented by a distance graph (a graph with edge weights equal to the distances 
between the end points of the edge). The network localization problem with the graph model of the 
network is equivalent to the graph realization problem. In $1979$, Saxe~\cite{JS79} proved that the 
problem of embedding graphs in the one dimensional space is NP-complete and in higher dimensional 
spaces it is NP-hard. Later Aspnes, Goldenberg and Yang~\cite{AGY4} proved that the problem of 
finding a realization of a graph is an NP-hard problem even if it is known that the graph has unique 
realization.

During the last few decades, some variants of the general network localization problem have been 
solved. In~\cite{SM13sp, SM09, SMM06} the localization problem with exact node distances has been 
discussed for wireless sensor networks; they used the ordering of nodes of the underlying network 
and graph rigidity property for localization. In distance-based network localization, the number of 
solutions of the network localization problem may be unique, finite or infinite (up to congruence). 
Testing the unique localizability of networks having exact distances among nodes has been discussed 
in~\cite{JJ05, AE06}. If a network is not uniquely localizable then it must have some nodes which 
may either be freely rotated with respect to some other nodes or reflected with respect to some 
edges. If some nodes of the network may be rotated then the number of solutions of the associated 
localization problem is infinite. If in the underlying graph of a network, a vertex (or a few 
vertices) may be reflected with respect to a set of neighbors that are almost co-linear then it is 
called a \textit{flip vertex} of the network and this phenomenon in network localization is called a 
\textit{flip ambiguity}. Analysis of flip ambiguity in network localization has been discussed 
in~\cite{AB10}. To find unique localization of networks removing the flip ambiguity of nodes is 
essential. 

In real field of applications, collecting the exact distances among adjacent pair of nodes is almost 
impossible. Doherty et al.~\cite{DPG1} formulated the localization problem with noisy distance as a 
non-convex optimization problem. They excluded the non-convex constraints from the general problem 
to obtain a convex version of it and solved the problem using semi-definite programming 
(SDP)~\cite{DPG1}. Biswas et al.~\cite{PKY8} converted the same non-convex network localization 
problem to convex localization problem by using the relaxation technique and solved by SDP. To the 
best of our knowledge, solving the general problem is still a challenge. In this work we sort the 
challenge by solving the general nonlinear non-convex network localization problem using Lagrangian 
optimization. As far as we know this is the first approach for solving the network localization 
problem without any modification of constraints.

In our previous works~\cite{AB16,SS17} (published in the proceedings of international conferences), 
we converted the nonlinear non-convex network localization problem to a root finding problem of a 
single variable continuous function $\phi(c_0)$, $c_0\in\mathbb R$ (where $\phi(c_0)=\psi(c_0)-1$). 
We choose the standard bisection method for solving this root finding problem since the iterations 
in this method are guaranteed to converge to a root. The root finding problem inherently includes 
the finite mini-max problem which is NP-hard~\cite{DP95}. We used the sequential quadratic 
programming~\cite{MG99, CZ10, PJD14} method to solve the finite mini-max problem. Using the 
sequential quadratic programming method, an approximate finite mini-max value of the function 
$\phi(c_0)$ was computed at $c_0$. Therefore in the iterative method, the sign of the function 
$\phi(c_0)$ was determined incorrectly due to the approximation. For instance, suppose at an 
iterative step, an approximate value of $\phi(c_0)$ is $.0055$. But the actual value of the 
function is $-.0000025$. In this case, the bisection method will consider the sign of the function 
positive though it is actually negative. Thus the correct interval containing the root was not 
determined in the bisection method due to approximation. This is a drawback of using the bisection 
method which is rectified in this paper. 
 
A network localization problem may have different congruent solutions in the Euclidean space even 
if the distance information are collected from some practical field of application of WSN. The 
Euclidean space is unbounded. Therefore the congruent solutions of the localization problem may be 
anywhere in the Euclidean space. In our work, we constructed the root finding 
problem~\cite{AB16,SS17} such that the estimated node positions of the network will be closer to 
the origin of the Euclidean Space with respect to some rectangular axis. We established that, we 
may always identify a compact region (i.e., a closed and bounded region) in the two dimensional 
Euclidean space containing the origin within which the localization problem must have a 
solution~\cite{AB16,SS17}.  

In this paper, the construction of the root finding problem from the network localization problem 
is revised thoroughly. We develop an iterative method in light of the bisection method for finding 
a root of $\phi(c_0)$, $(c_0\in\mathbb R)$. In each step of the iterative method it is required to 
determine whether the function $\phi(c_0)$ has values with opposite signs at the end points of a 
sub-interval of the interval identified in the previous iteration. We compute a tight bound for the 
function $\phi(c_0)$ at $c_0$ which depends on its approximately computed value in the iteration. 
Using these bounds and the monotonic non-increasing property of $\phi(c_0)$ we determine the 
required sub-interval in our method. In this way without computing the exact value of $\phi(c_0)$ 
we proceed for finding a solution of $\phi(c_0)$. We establish that the method converges to a 
solution of the network localization problem and the solutions of root finding problem may be 
achieved up to a desired label of accuracy within an acceptable number of iterations.

\textit{Organization of the paper}: In Section~\ref{S2}, we present the general network 
localization problem with the convex and non-convex distance constraints. The construction of 
Lagrangian form of the network localization problem is given in Section~\ref{S3}. In 
Section~\ref{S4}, we discuss the technique for solving the Lagrangian network localization problem. 
Convergence of the solution technique is analyzed in Section~\ref{S5} along with some instances of 
networks for which we implement the root finding method for finding a localization. We sketch an 
error analysis of the proposed method in Section~\ref{S6} and conclude in Section~\ref{S7}.  

\section{Network localization problem}
\label{S2}

Let $\aleph$ be an ad-hoc network; $V$ is the set of nodes ($|V|=\nu$) and $E$ is the set of 
communication links. The underlying graph $G_{\aleph}(V,E)$ is the \textit{grounded graph} of 
$\aleph$. A \textit{realization} of $G_{\aleph}(V,E)$ in $d$-space is a $1$-$1$ mapping $f$ from 
the 
vertex set $V$ to $\mathbb R^{d}$. Two different realizations $f$ and $g$ of $G_{\aleph}$ are 
\textit{equivalent} if for each edge $\{u,v\}$ in $E$, $||f(u)-f(v)||=||g(u)-g(v)||$ where, $||.||$ 
is the standard Euclidean norm in $\mathbb R^{d}$. $f$ and $g$ are \textit{congruent} if the 
equality $||f(u)-f(v)||=||g(u)-g(v)||$ holds for each pair of vertices in $V$. A realization $f$ of 
$G_{\aleph}(V,E)$ in $d$-space is \textit{unique} up to congruence if every realization $g$ 
equivalent to $f$ is congruent to $f$. If $f$ and $g$ are congruent to each other then $g$ can be 
obtained from $f$ by a suitable transformation of the coordinate system in $d$-space and 
recomputing 
$f$ according to the new coordinate system. To fix a coordinate system in a $d$-space, $d+1$ 
independent points are required with known positions. Therefore a uniquely realizable framework can 
be uniquely located in a $d$-dimensional space if we can fix $d+1$ points in the space. On the 
other 
hand, if $G_{\aleph}$ has two or more equivalent realizations which are non-congruent in a 
$d$-space 
then $G_{\aleph}$ is called \textit{ambiguously} $d$-realizable.

A distance based localization algorithm determines the locations of nodes in a network by using 
known positions of anchors, if any, and a given set of inter-node distance measurements. Let 
$A=\{u_1,u_2,\cdots,u_m\}$ be the the set of anchor nodes with known positions $\{a_{1}$, $a_{2}$, 
$\cdots$, $a_{m}\}$ and $B=\{b_{1},b_{2}, \cdots, b_{n}\}$ be the nodes with unknown positions 
$\{x_{1}, x_{2}, \cdots, x_{n}\}$. In this work, we find the positions of the nodes in $B$ assuming 
$m=0$, i.e., the network has no anchor node. The technique is equally applicable for networks with 
anchor vertices. Let $\emph N$ be the set of all edges joining $x_i$'s. Upper and lower bound on 
the 
exact length of an edge in $\emph N$ joining $x_i$ and $x_j$ are denoted by $\overline{d}_{ij}$ and 
$\underline{d}_{ij}$. Let $\underline{D}=(\underline{d}_{ij})_{n\times n }$ and 
$\overline{D}=(\overline{d}_{ij})_{n\times n }$ be the matrices with $ij$-th entries 
$(\underline{d}_{ij})$ and $(\overline{d}_{ij})$ respectively. If two nodes $x_{i}$ and $x_{j}$ are 
not adjacent in the grounded graph $G_{\aleph}(V,E)$ then both the matrices have $ij$-th entry 
zero. 
Under the anchor free setting, the network localization problem may also be formulated as a 
nonlinear problem. The problem may formally be described as follows.

\begin{problem}
Given the edge set $N$ and the matrices $\underline{D}=(\underline{d}_{ij})_{n\times n}$,
$\overline{D}=(\overline{d}_{ij})_{n\times n}$, of the grounded graph $G_{\aleph}(V,E)$ of a 
network $\aleph$ with a set $B$ of $n$ nodes with unknown positions,
\begin{center}
\begin{tabular}{lll}
 Find & $X=(x_1, x_{2}, \cdots, x_{n})$   & \\
 such that & $\underline{d}_{ij}^{2} \leq ||x_{i}-x_{j}||^{2} \leq \overline{d}_{ij}^{2}$,
                                  $\{b_{i},b_{j}\}\in N$
\end{tabular}
\end{center}
\label{p1}
\end{problem}

If $X'$ is an estimation for the unknown positions of nodes in $B$ obtained by solving 
Problem~\ref{p1}, every realization congruent to $X'$ obtained by translating the coordinate system 
is also a solution to Problem~\ref{p1}. The re-computations of such solutions can be avoided by 
including a function $\min\sum_{i=1}^{n}||x_{i}||^{2}$ in the Problem~\ref{p1} as the objective 
function. A solution to this optimization problem will minimize the sum of square distances of 
unknown nodes from the origin. 

Let each $x_{i}\in\mathbb R^{d}$ then $x=(x_1$,$x_2$,$\cdots$,$x_n)$ is a point in $\mathbb 
R^{dn}$. Suppose $|N|=n_{0}$. For each edge $e_{k}\in N$, let $f_{k}:\mathbb R^{dn}\rightarrow 
\mathbb R$ be the function
\begin{center}
 $f_{k}(x)=||x_{i}-x_{j}||^{2},~~~~~1\leq k\leq n_{0} $
\end{center}
where $e_k=\{b_i,b_j\}$. Let $\underline{d}_k=\underline{d}_{ij}$ and 
$\overline{d}_k=\overline{d}_{ij}$. Using these notations, Problem~\ref{p1} can be rewritten as
Problem~\ref{P2}.

\begin{problem}
Given the matrices $\underline{D}=(\underline{d}_{k})_{n\times n}$,
$\overline{D}=(\overline{d}_{k})_{n\times n}$ of the network $\aleph$ with a set $X$ of $n$
nodes with unknown positions. Find solutions of the nonlinear optimization problem
\label{P2}
\begin{center}
\begin{tabular}{lll}
  Minimize  & $f_{0}(x)=\sum_{i=1}^{n}||x_{i}||^{2}$  &  \\[2.5mm]
  such that & $\underline{d}_{k}^{2} \leq f_{k}(x) \leq \overline{d}_{k}^{2}$,
                                         & $\forall~1\leq k\leq n_{0}$         
\end{tabular}
\end{center}
\end{problem}

In Problem~\ref{P2}, each constraint $\underline{d}_{k}^{2} \leq f_{k}(x) \leq 
\overline{d}_{k}^{2}$ 
can be broken into two parts, namely, $f_{k}(x)\leq\overline {d}_{k}^{2}$ and 
$\underline{d}_{k}^{2}\leq f_{k}(x)$. For each $k(1\leq k\leq n_0)$, if $y_1, y_2 \in \mathbb 
R^{dn}$ satisfy $f_{k}(x)\leq\overline {d}_{k}^{2}$ then $\forall t\in[0,1]$ 
$$f_{k}(ty_1+(1-t)y_{2})\leq tf_{k}(y_1)+(1-t)f_{k}(y_{2})\leq \overline d_{k}^2.$$ Therefore, each 
$f_{k}(x)\leq\overline {d}_{k}^{2}$ is a convex constraint~\cite{}. It can be shown that the 
constraints $\underline{d}_{k}^{2}\leq f_{k}(x)$ are not convex. Thus the constraints in 
Problem~\ref{P2} can be classified into two types based on the convexity,
\begin{equation}
\label{c1}
\textit{Convex constraints:~~~~~~~~~~~~~~}f_{k}(x)\leq \overline {d}_{k}^{2},
\end{equation}
\begin{equation}
\label{c2}
\textit{Non-convex constraints:~~~~~~~~}\underline{d}_{k}^{2}\leq f_{k}(x).
\end{equation}

This work is focused on solving the network localization problem keeping the non-convex distance 
constraints unaltered. Though Doherty, et al.~\cite{DPG1} formulated the localization problem as 
non-convex optimization problem~\cite{DPG1} they exclude the non-convex distance constraints to 
solve the problem using semi-definite programming (SDP). Biswas, et al.~\cite{PKY8} converted the 
non-convex network localization problem into a convex optimization problem by relaxing the 
non-convex inequality constraints and solved the relaxed problem~\cite{TM97, FK97} using 
SDP~\cite{VB96}. A reason behind using SDP method is that the SDP is approximately solvable in 
polynomial time~\cite{BJ12}. Yet none of these approaches solved the general network localization 
problem.

In this paper, using the Lagrangian theory, the anchor free network localization problem with noisy 
distance measurements is converted into a root finding problem without any modification of the 
nonlinear non-convex distance constraints. We solve the root finding problem using an iterative 
method and prove the convergence of the method to a solution of the localization problem. The 
method gives an estimation for node positions up to a desired level of accuracy within a real time 
period.

\section{Root finding problem construction using Lagrangian function}
\label{S3}
The network localization problem is inherently a non-convex optimization problem. In this section, 
we describe the Lagrangian function with the help of which we transform the general localization 
problem into a root finding problem. In Problem~\ref{P2}, each non-convex constraint 
$\underline{d}_{k}^{2}\leq f_{k}(x)$ can be written as $f_{n_0+k}(x) \leq \underline {d}_{k}^{2}$ 
where, $2\underline d_{k}^2 - f_k(x) = f_{n_0+k}(x)$. These modifications convert Problem~\ref{P2} 
into a non-convex optimization problem as described in Problem~\ref{P3}.

\begin{problem}
Given the matrices $\underline{D}=(\underline{d}_{ij})_{n\times n}$,
$\overline{D}=(\overline{d}_{ij})_{n\times n}$ of the network $\aleph$ consisting of $n$ nodes
with unknown positions $x = (x_1, x_{2}, \cdots, x_{n})$, solve the nonlinear optimization problem:
\label{P3}
\begin{center}
\begin{tabular}{lll}
 Minimize  & $f_0(x)$                                                                  \\
 such that & $f_{k}(x)\leq c_k$,
                                       & $1\leq k\leq 2n_{0}=r$(say)                 
\end{tabular}
\end{center}
where, $c_k=\overline d_k^2,~1\leq k\leq n_0$ and $c_k=\underline d_{k-n_0}^2,~n_0+1\leq k\leq r$.
\end{problem}

\subsection{Lagrangian function}
Let $c=[c_{0},c_{1},c_{2},\cdots,c_{r}]$, where $c_{0}$ is a positive real number independent of 
$x$ and $c_k(1\leq k\leq r)$ are defined in Problem~\ref{P3}. Note that, $c_k>0~\forall k$, because 
the distance information for each pair of nodes is collected from a network where no two sensors 
are in the same position.

\begin{definition}
  The Lagrangian function for Problem~\ref{P3} is defined as,
  $$\mathbb{L}(x,c_0)=\displaystyle\max_{0\leq k\leq r}\frac{f_{k}(x)}{c_{k}}.$$
\end{definition}
Lagrangian function may be defined in many ways~\cite{PKY8} for an optimization problem among which 
we consider the above form for the Lagrangian function in this paper. Shortly we prove that the
Lagrangian function always attains its infimum within the field of interest. With the help of 
$\mathbb L(x,c_0)$ the problem defined in Problem~\ref{prob:lagOpt} is later proved to be 
equivalent to \ref{P3} under certain restrictions which are acceptable in any real situations.

\begin{problem} Let the function $\mathbb L(x,c_0)$ attains its infimum at some point $z$ over the
domain of definition, i.e.,
\label{prob:lagOpt}
 $$\mathbb{L}(z,c_0)=
    \displaystyle\inf_{x\in\mathbb R^{dn}}\displaystyle \max_{0\leq k\leq r}
                                                        \frac{f_{k}(x)}{c_{k}}.$$\
We have to find the $z$.
\end{problem}

Below we describe a result from~\cite{GY01} which says that, $x$ is an optimal solution of 
Problem~\ref{P3} if and only if it is a solution of Problem~\ref{prob:lagOpt}. Thus if we can  
find a solution $x$ of Problem~\ref{prob:lagOpt} then $x$ may easily be mapped to an optimal 
solution of Problem~\ref{P3} using this result. It may also be noted that this technique does not 
need the convexity of the constraint functions, i.e. we do not ignore the non-convex constraints 
from the general problem.

\begin{result}[\cite{GY01}]
\label{r1}
  Let $\overline x\in\mathbb R^{dn}$ be an optimal solution of the network localization problem as
  defined by Problem~\ref{P3} and $c_{0}=f_{0}(\overline{x})>0$. A different $x_{0}\in\mathbb
  R^{dn}$ is an optimal solution of Problem~\ref{P3} if and only if $x_{0}$ is a solution of the
  unconstrained problem defined in Problem~\ref{prob:lagOpt}.
\end{result}

In the rest of this section, using Lagrangian theory for non-convex optimization
problem~\cite{GY01}, we convert Problem~\ref{P3} into a root finding problem involving single
variable.

\subsection{Lagrange's optimization problem}
\label{ss1}
The network models under consideration are picked up from networks already embedded in the field of
interest. For such an already embedded network, Problem~\ref{P3} satisfies the following conditions:
\begin{enumerate}
 \item The problem always has at least one feasible solution, since the graph underlying the 
network 
is constructed from a network already embedded in the field of interest.
 \item In Problem~\ref{P3}, $\displaystyle\lim_{||x||\rightarrow\infty} f_0(x)=\infty$. Since in  
practical applications, the field of interest is always bounded, the feasible region of       
Problem~\ref{P3} is also bounded. Since the objective function     
$f_0(x)=\sum_{i=1}^{n}||x_i||^2$ is everywhere continuous, there always exists some real constant 
$M$ such that $0<f_0(x)\leq M$ for all feasible $x$.
 \item Since $f_0(x)\leq M$ in the feasible region and Problem~\ref{P3} has feasible solution,      
 
Problem~\ref{P3} always possesses an optimal solution, say $\overline{x}$, in $\mathbb R^{dn}$. At 
$x=\overline{x}$, $f_0(\overline{x})\geq 0$. It may be noted that $f_0(\overline{x})=0$ only when 
all the points are at origin.
 \item In Problem~\ref{P3} the feasible region $X_0=\{x\in\mathbb R^{dn}/f_{k}(x)\leq c_k,~1\leq 
k\leq r\}$ is compact. It is shortly proved in Lemma~\ref{l1}.
 \item Since $f_0$ is polynomial, it is uniformly continuous on the feasible region $X_0$.
 \item For $c_0\geq f_0(\overline{x})$, there always exists some $x\in\mathbb R^{dn}$ such that
$f_0(\overline{x})\leq f_0(x)\leq c_0$ since $f_0$ is continuous.
\end{enumerate}

Under the above assumptions, we develop the following result which is used for developing the
proposed localization problem.

\begin{lemma}
\label{l1}
In Problem~\ref{P3} the feasible region $$X_0=\{x=(x_1,x_2,\ldots,x_n)\in\mathbb R^{dn} 
\Big | f_{k}(x)\leq c_k,~1\leq k\leq r\}$$ is compact.
\end{lemma}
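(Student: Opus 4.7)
The plan is to show closedness and boundedness separately and then invoke Heine--Borel, since the ambient space $\mathbb{R}^{dn}$ is finite-dimensional.

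For closedness, I would observe that each $f_k$ is a polynomial in the coordinates of $x$ (each is either $\|x_i-x_j\|^2$ or $2\underline{d}_{k-n_0}^2 - \|x_i-x_j\|^2$), hence continuous on all of $\mathbb{R}^{dn}$. Each sub-level set $\{x \in \mathbb{R}^{dn} : f_k(x) \leq c_k\}$ is therefore the preimage of the closed half-line $(-\infty,c_k]$ under a continuous map, hence closed. Then $X_0$ is a finite intersection $\bigcap_{k=1}^{r}\{f_k(x)\leq c_k\}$ of closed sets and so is itself closed.

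For boundedness, I would lean on assumption~2 of the preceding enumeration: because the field of interest is physically bounded, one may fix a constant $M>0$ such that $f_0(x)=\sum_{i=1}^{n}\|x_i\|^2 \leq M$ on $X_0$. From this a single estimate suffices: every component vector satisfies $\|x_i\|^2 \leq M$, so the Euclidean norm of the concatenated point in $\mathbb{R}^{dn}$ is at most $\sqrt{M}$, showing $X_0$ sits inside a closed ball of radius $\sqrt{M}$. Combining closed and bounded with Heine--Borel yields compactness.

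The main obstacle is honest boundedness, not closedness. The edge constraints $\underline{d}_k^2 \leq \|x_i-x_j\|^2 \leq \overline{d}_k^2$ are translation invariant, so the raw constraint set is unbounded in the anchor-free setting: given any feasible $x$, every shift $x+(\tau,\ldots,\tau)$ is also feasible. So compactness cannot be extracted from the $f_k\leq c_k$ alone; the proof must explicitly fold in the practical/physical assumption that the search is confined to a bounded field of interest (or, equivalently, the hypothesis $f_0(x)\leq M$ on feasible $x$). I would therefore state this dependence clearly in the proof, rather than try to derive boundedness from the distance constraints themselves, since any attempt at the latter would fail.
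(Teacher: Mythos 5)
Your proof is correct and follows the same overall skeleton as the paper's: invoke Heine--Borel in $\mathbb{R}^{dn}$, get boundedness from condition~(2) of the model assumptions, and get closedness of $X_0$ as a finite intersection of closed sets $X_k=\{x: f_k(x)\leq c_k\}$. The difference is in how closedness of each $X_k$ is established: the paper runs an explicit $\varepsilon$-argument, taking a convergent (Cauchy) sequence $\{y^l\}$ in $X_k$ and using the triangle inequality $\|x_i-x_j\|\leq\|x_i-y^l_i\|+\|y^l_i-y^l_j\|+\|y^l_j-x_j\|<2\varepsilon+\sqrt{c_k}$ to show the limit stays in $X_k$, whereas you simply note that each $f_k$ is a polynomial, hence continuous, so $X_k=f_k^{-1}\big((-\infty,c_k]\big)$ is the preimage of a closed set and therefore closed. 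Your route is shorter and avoids the case split over $1\leq k\leq n_0$ versus $n_0+1\leq k\leq 2n_0$, at no loss of rigor. On boundedness you are also more candid than the paper: the paper disposes of it with ``in view of condition~(2), $X_0$ is bounded,'' while you correctly observe that the distance constraints alone are translation invariant (so the literal constraint set is unbounded in the anchor-free setting) and that boundedness genuinely rests on the external assumption that the field of interest is bounded, i.e.\ $f_0(x)\leq M$ on the feasible set, giving $\|x\|\leq\sqrt{M}$ since $f_0(x)$ is exactly the squared norm of the concatenated point. Making that dependence explicit is a fair and useful strengthening of the exposition; the mathematical content of the two proofs is otherwise equivalent.
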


\begin{proof}
Without lose of generality we restrict $X_0$ in $\mathbb R^{dn}_+$. Otherwise the origin may be
shifted so that the feasible region is included in $\mathbb R^{dn}_+$. 

A set is compact in $\mathbb R^{dn}$ if and only if it is both bounded and closed~\cite{MAA}. We 
give an explicit proof of the compactness by showing that the above defined set is both bounded and 
closed in $\mathbb R^{dn}$. In view of the condition $(2)$, $X_0$ is bounded.

\textit{Closed-ness of $X_0$:} Let for an arbitrarily chosen $k$, $X_k=\{x\in\mathbb 
R^{dn}/f_{k}(x)\leq c_k\}$ and $\{y^l\}_l$ [where $y^l=(y^l_1,y^l_2,\ldots,y^l_{n})$] be a Cauchy 
sequence in $X_k$ with limit $x$. Let $k$-th edge of the grounded graph joins the nodes $i,j$ of 
the network. For $1\leq k\leq n_0$, $f_{k}(y^l)\leq c_k$ $\Rightarrow$ $||y^l_i-y^l_j||^2\leq c_k$ 
and if $n_0+1\leq k\leq 2n_0$, $f_{k}(y^l)\leq c_k$ $\Rightarrow$ $2c_k-||y^l_i-y^l_j||^2\leq c_k$. 
Let us first consider the case $1\leq k\leq n_0$.

Since $y^l\rightarrow x$ therefore for given any $\varepsilon>0$ there exists some $m\in\mathbb
N$ where for all $l\geq m$
                   $$||y^l-x||<\varepsilon.$$
This gives for all $l\geq m$,
                   $$||y^l_u-x_u||<\varepsilon ~~~~~~for~each~ 1\leq u\leq n.$$
Thus for $l\geq m$,
\begin{center}
\begin{tabular}{ll}
        & $||x_i-x_j||$\\
 $=$    & $||x_i-y^l_i+y^l_i-y^l_j+y^l_j-x_j||$\\
 $\leq$ & $||x_i-y^l_i||+||y^l_i-y^l_j||+||y^l_j-x_j||$\\
 $<$    & $2\varepsilon+|\sqrt c_k|.$
\end{tabular}
\end{center}

Since the above relation holds for arbitrarily chosen $\varepsilon$ we get $||x_i-x_j||\leq|\sqrt 
c_k|$, i.e., $||x_i-x_j||^2\leq c_k$. Thus if $f_k(y^l)=||y^l_i-y^l_j||^2$ then $X_k$ is a compact 
set. For $n_0+1\leq k\leq 2n_0$, the proof for closed-ness is similar as before. Hence for each 
$k$, $X_k$ is closed. $X_0$ is the intersection of finite number of closed sets and it is closed. 
Therefore $X_0$ is compact.
\end{proof}

The optimum solution of $\displaystyle\inf_{x\in\mathbb R^{dn}}\displaystyle\max_{0\leq k\leq
r}f_{k}(x)/c_{k}$ varies for different values of $c_0$ (for $1\leq k\leq r$, each $c_k$ is given 
in the problem as constants). We define a scalar function $\psi$ of parameter $c_{0}\in\mathbb
R_{+}-\{0\}$ as follows,
\begin{equation}
\label{e1}
\psi(c_{0})=\displaystyle\inf_{x\in\mathbb R^{dn}}\displaystyle \max_{0\leq k\leq r}
 \{f_{k}(x)/c_{k}\}.
\end{equation}

To construct the Lagrangian optimization problem we here present some results from~\cite{GY01} 
involving function $\psi(c_{0})$.

\begin{result}[\cite{GY01}] \label{r2}
Let $\{f_k:(0\leq k\leq r)\}$ be a finite set of continuously differentiable functions defined on an
unbounded set $X$, $c_k> 0$. Consider the optimization problem
\begin{center}
\begin{tabular}{ll}
 $\min f_0(x)$\\
 $subject~to~f_{k}(x)\leq c_k~(1\leq k\leq r)$
\end{tabular}
\end{center}
under the following assumptions:
\begin{enumerate}
 \item The feasible region is compact.
 \item $\displaystyle\lim_{||x||\rightarrow\infty}f_0(x)=\infty.$
 \item If $\overline{x}$ is an optimal solution and for any arbitrary constant 
$c_0>f_0(\overline{x})$
\end{enumerate}
Then there exists some $x\in X$ such that $f_0(\overline{x})\leq f_0(x)< c_0$. Also the following
conditions hold.
\begin{enumerate}
 \item $c_{0}< f_{0}(\overline x)\Rightarrow\psi(c_{0})> 1$.
 \item With addition to the above conditions if $f_0(x)$ is uniformly continuous then $c_{0}\geq
       f_{0}(\overline x)\Rightarrow\psi(c_{0})\leq 1$.
 \item $\psi$ is a monotone non-increasing continuous function.
 \item $c_{0}=f_{0}(\overline{x})$ if and only if $\psi(c_{0})=1.$
\end{enumerate}
\end{result}

\begin{theorem} Let $\overline{x}$ be an optimal solution of Problem~\ref{P3}. Then $\psi(c_0)$ has 
the following properties:
\label{t1}
\begin{enumerate}
    \item If $c_0< f_0(\overline x)$ then $\psi(c_0)> 1$.
    \item If $c_{0}\geq f_{0}(\overline x)$ then $\psi(c_{0})\leq 1$.
    \item $\psi$ is a non-increasing continuous function of $c_{0}$.
    \item $c_{0}=f_{0}(\overline{x})$ if and only if $\psi(c_{0})=1.$
\end{enumerate}
\end{theorem}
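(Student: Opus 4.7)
The plan is to recognize that Theorem~\ref{t1} is Result~\ref{r2} specialized to Problem~\ref{P3}. So the proof reduces to verifying that Problem~\ref{P3} satisfies the hypotheses of Result~\ref{r2}; once that is done, the four items of the theorem read off directly from the four conclusions listed there.

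First I would confirm the structural hypotheses. Each $f_k$ for $0 \leq k \leq r$ is a polynomial in the coordinates of $x \in \mathbb{R}^{dn}$: $f_0(x) = \sum_{i=1}^n \|x_i\|^2$ is quadratic, each $f_k(x) = \|x_i - x_j\|^2$ for $1 \leq k \leq n_0$ is quadratic, and each $f_{n_0+k}(x) = 2\underline{d}_k^2 - f_k(x)$ is quadratic. Hence all are continuously differentiable on the unbounded ambient space $\mathbb{R}^{dn}$, and all $c_k$ are strictly positive by construction. I would then invoke the enumerated conditions listed at the start of Subsection~\ref{ss1}: condition~(2) gives $\lim_{\|x\|\to\infty} f_0(x) = \infty$, condition~(5) gives uniform continuity of $f_0$ on the feasible region, and condition~(6) supplies, for any $c_0 > f_0(\overline{x})$, some feasible $x$ with $f_0(\overline{x}) \leq f_0(x) \leq c_0$ (which can be sharpened to strict inequality, as required by Result~\ref{r2}, by the continuity of $f_0$ along a line segment from $\overline{x}$). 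Finally, Lemma~\ref{l1} establishes the compactness of the feasible region $X_0$. With this, every hypothesis of Result~\ref{r2} is in force.

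With the hypotheses verified, the four conclusions of Result~\ref{r2} translate verbatim into the four items of Theorem~\ref{t1}: part~1 yields item~1, part~2 (which needs the uniform continuity of $f_0$, already verified) yields item~2, part~3 yields item~3, and part~4 yields item~4. The main obstacle in the whole argument is supplying the compactness of $X_0$, which is exactly what Lemma~\ref{l1} handles; beyond that, the proof is essentially a bookkeeping exercise matching the specific setup of Problem~\ref{P3} to the abstract conditions required by Result~\ref{r2}.
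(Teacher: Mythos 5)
Your proposal is correct and follows essentially the same route as the paper: verify that Problem~\ref{P3} meets the hypotheses of Result~\ref{r2} (continuous differentiability of the $f_k$, positivity of the $c_k$, compactness of $X_0$ via Lemma~\ref{l1}, existence of an optimal $\overline{x}$, and the intermediate-value condition on $f_0$) and then read off the four conclusions. Your write-up is in fact slightly more careful than the paper's, e.g.\ in explicitly citing uniform continuity for item~2 and noting the strict-inequality sharpening, but the argument is the same.
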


\begin{proof}
Under the network model, we have seen that $f_i$s in Problem~\ref{P3} are continuously 
differentiable and $c_i>0$. The above mentioned condition~$(4)$ of the underlying network model 
shows that the feasible region of Problem~\ref{P3} is compact. The condition~$(3)$ shows that the 
Problem~\ref{P3} has an optimal solution, say $\overline{x}$. By the condition~$(6)$, there always 
exists some $x\in\mathbb R^{dn}$ such that $f_0(\overline{x})\leq f_0(x)\leq c_0$ when $c_0\geq 
f_0(\overline{x})$. The proof of this theorem then follows from Problem~\ref{r2}.
\end{proof}
In the following paragraph, we present the network localization problem as a root finding problem
which may be obtained from Problem~\ref{P2} using Theorem~\ref{t1}.

\begin{problem} Given the matrices $\underline{D}=(\underline{d}_{ij})_{n\times n}$,
$\overline{D}=(\overline{d}_{ij})_{n\times n}$ of the network $\aleph$ with a set $X$ of $n$
nodes with unknown positions. Let $\psi(c_0)=\displaystyle\inf_{x\in\mathbb
R^{dn}}\displaystyle \max_{0\leq k\leq r}\{f_{k}(x)/c_k\}.$
\begin{center}
\label{P6}
    Find $c_0$ such that $\psi(c_0)=1$.
\end{center}
\end{problem}

To get a good estimation for the node positions in the network, we need to search for some positive 
real number $c$ for which there exists some $x\in\mathbb R^{dn}$ such that the value of the 
function 
$\psi$ is equal or very close to $1$. In the rest of this paper, we will concentrate for finding or 
estimating the roots of $\psi(c_0)=1$.

\section{Solving the root finding problem}
\label{S4}

In the previous section, we have seen that solving the network localization problem is equivalent 
to 
solving Problem~\ref{P6}. Here we prove that if we can find a root $c_0=c^*_0$ of the equation 
$\psi(c_0)=1$, node positions of the network will be obtained from the corresponding $x$ at which 
$\mathbb L(x,c_0)$ $(=\displaystyle \max_{0\leq k\leq r}\{f_{k}(x)/c_{k}\})$ exactly equals $1$.
\begin{theorem}
 Let $c_0^*$ is a root of the equation $\psi(c_0)=1$. Then there exists an optimizing $x=x^*$ such
 that, $$\psi(c_0^*)=\mathbb L(x^*,c_0^*)=1.$$
\end{theorem}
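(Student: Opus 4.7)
The plan is to exhibit the optimizing $x^{*}$ explicitly rather than argue compactness of a sublevel set of $\mathbb{L}(\cdot,c_{0}^{*})$. The natural candidate is the optimal solution $\overline{x}$ of Problem~\ref{P3}, whose existence is guaranteed by the assumptions on the network model (condition $(3)$ in Section~\ref{ss1}). I first invoke Theorem~\ref{t1}, part $(4)$: since $c_{0}^{*}$ is a root of $\psi(c_{0})=1$, it follows that $c_{0}^{*}=f_{0}(\overline{x})$. This identification is the key link between the root of the scalar equation and the optimal solution of the constrained problem.

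Next, I would evaluate $\mathbb{L}(\overline{x},c_{0}^{*})$ term by term. For $k=0$, $f_{0}(\overline{x})/c_{0}^{*}=f_{0}(\overline{x})/f_{0}(\overline{x})=1$. For $1\leq k\leq r$, feasibility of $\overline{x}$ in Problem~\ref{P3} yields $f_{k}(\overline{x})\leq c_{k}$, so each ratio satisfies $f_{k}(\overline{x})/c_{k}\leq 1$. Hence the maximum in the definition of $\mathbb{L}$ is attained at index $k=0$ and equals $1$, giving $\mathbb{L}(\overline{x},c_{0}^{*})=1$.

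Finally, I would combine this with the definition of $\psi$. By construction $\psi(c_{0}^{*})=\inf_{x}\mathbb{L}(x,c_{0}^{*})\leq\mathbb{L}(\overline{x},c_{0}^{*})=1$, and by hypothesis $\psi(c_{0}^{*})=1$. Therefore $\mathbb{L}(\overline{x},c_{0}^{*})$ realizes the infimum, and setting $x^{*}=\overline{x}$ yields $\psi(c_{0}^{*})=\mathbb{L}(x^{*},c_{0}^{*})=1$, as required.

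The only subtle point, and hence the expected obstacle, is justifying that such an $\overline{x}$ exists and that the value $f_{0}(\overline{x})$ is precisely $c_{0}^{*}$. Existence rests on the compactness of the feasible region $X_{0}$ established in Lemma~\ref{l1} together with continuity of $f_{0}$, while the identification $c_{0}^{*}=f_{0}(\overline{x})$ uses the non-trivial equivalence in Theorem~\ref{t1}$(4)$. Once these are in hand, the rest of the argument is a short computation, and no separate attainment theorem for the infimum over the unbounded domain $\mathbb{R}^{dn}$ is needed.
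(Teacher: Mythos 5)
Your proposal is correct, but it reaches the conclusion by a genuinely different route than the paper. The paper argues abstractly: it replaces the search over $\mathbb R^{dn}$ by a search over a compact box $K$ containing all (congruent copies of) feasible realizations, and then uses continuity of $\mathbb L(\cdot,c_0^*)$ and closedness of $\mathbb L(K,c_0^*)$ to conclude that the infimum is attained at some $x^*\in K$. You instead exhibit the minimizer explicitly: Theorem~\ref{t1}(4) (i.e., Result~\ref{r2}) turns the hypothesis $\psi(c_0^*)=1$ into the identification $c_0^*=f_0(\overline{x})$, feasibility of $\overline{x}$ in Problem~\ref{P3} gives $f_k(\overline{x})/c_k\leq 1$ for $1\leq k\leq r$ while the $k=0$ term equals $1$, hence $\mathbb L(\overline{x},c_0^*)=1=\psi(c_0^*)$ and the infimum over all of $\mathbb R^{dn}$ is attained at $x^*=\overline{x}$. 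What your route buys is that it closes a step the paper's proof leaves loose, namely why passing from the infimum over $\mathbb R^{dn}$ (which defines $\psi$) to the infimum over $K$ does not change the value; your direct evaluation shows the global infimum is actually achieved at a feasible point. What it costs is reliance on the imported equivalence in Theorem~\ref{t1}(4), whereas the paper's compactness argument uses only continuity, and it still needs the existence of $\overline{x}$, which you correctly trace back to Lemma~\ref{l1} and continuity of $f_0$. One small point to make explicit: you divide by $c_0^*=f_0(\overline{x})$, so you should note $f_0(\overline{x})>0$; this holds because the standing assumption $c_0>0$ and the strictly positive lower distance bounds $\underline{d}_k>0$ rule out all nodes lying at the origin.
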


\begin{proof}
Since the feasible region of the network localization problem is compact the optimal solution of 
the 
problem lies within a compact set. Therefore instead of searching the minimizing $x$ of the 
function 
$\displaystyle\max_{0\leq k\leq r,c_0=c_0^*}\{f_k(x)/c_k\}$ all over $\mathbb R^{dn}$ we may 
restrict our search on a compact subset, say $K\subset\mathbb R^{nd}$, containing the feasible 
region of the network localization problem. Such a compact set for Problem~\ref{P6} may be 
constructed as follows:

Consider the field of interest, $F$, for localizing the network in $\mathbb R^d$. If $x$ is a 
feasible solution of the general network localization problem somewhere in $\mathbb R^{d}$ then by
using the translation and rotation operations we may get a congruent realization of the network in
$F$. Since $F$ is bounded we will get some upper bound as well as lower bound for each coordinate
of any point lying in the region. If we get any localization of the network obtained by solving
Problem~\ref{P6} then it will lie within the field of interest.

Let $M'$ and $m'$ be the maximum and minimum for all of the $d$ coordinates in the field of
interest. Consider a $dn$-dimensional box 
 $$K=\{(y_1,y_2,\ldots,y_i,\ldots,y_{dn})|~m'\leq y_i\leq M',\forall i\}$$ 
in $\mathbb R^{dn}$. $K$ is always compact. Let $x=(x_1,~\ldots,~x_n)$ be a realization of the 
network within the field of interest. For each $i$, if $x_i=(x_{i1},x_{i2},\ldots,x_{id})$ then 
$m'\leq x_{ij}\leq M'$. Therefore corresponding to each solution of the network localization 
problem there is a point in the $dn$-dimensional box.

The function $\mathbb L(x,c_0^*)$ is a continuous function of the variable $x$ on this compact set
$K$. The proof of the theorem will be followed if we can show that the continuous function $\mathbb
L(x,c_0^*)$ defined on $K$ attains its minimum at some point in $K$.

Since $K$ is a compact set and the function $\mathbb L$ is continuous, $\mathbb L(K,c_0^*)$ is a
compact set (i.e., closed and bounded). Also the infimum of any set is either a limit point or an 
element of the set. In both cases, the infimum of $\mathbb L(K,c_0^*)$ lies inside $\mathbb 
L(K,c_0^*)$ since, $\mathbb L(K,c_0^*)$ is closed. Therefore, we get some $x^*$ such that
$\mathbb L(x^*,c_0^*)= \displaystyle\inf_{x\in K}\mathbb L(x,c_0^*)$ i.e., 
$\psi(c_0^*)=\mathbb L(x^*,c_0^*)=1.$
\end{proof}

We develop an iterative method in light of the bisection method for finding a root of 
$\psi(c_0)=1$, $(c_0\in\mathbb R)$. The method is guaranteed to converge to a root of the 
continuous function $\psi(c_0)-1$ on an interval $[c_{01},c_{02}]$, if $(\psi(c_{01})-1)$ and 
$(\psi(c_{02})-1)$ have opposite signs. At the initial stage of the iterative method we search for 
an interval containing $c_0$ within which $\psi(c_0)-1$ must have a root. The searching process may 
progress as follows: 

Consider a real number $c_0=c_{01}>0$. For computing $\psi(c_{01})-1$ it is required to solve a 
finite mini-max problem which is an NP-hard problem~\cite{DP95}. We use smoothing gradient 
technique (Section~\ref{SS1}) for computing an approximate value of $\psi(c_{01})-1$. In this 
smoothing technique, $\psi(c_{01})-1$ may be approximated such that depending on the approximated 
value of the function we will get an interval within which the actual functional value lies. Using 
these bounds and the monotonic non-increasing property of the function we determine the sign of 
$\psi(c_{01})-1$ (Theorem~\ref{T1}). If $c_{01}$ is not a root of $\psi(c_0)-1$ then one of the 
following cases may occur.
\vspace{1mm}

\textbf{case $1$.} ($\psi(c_{01})-1$ is positive): Choose a point $c_0=c_{02}$ 
$(=c_{01}+\alpha,~\alpha$ is an arbitrary positive number$)$. Since $\psi(c_0)-1$ is a 
non-increasing continuous function of $c_0$ (Theorem~\ref{t1}), then for sufficiently large 
constant $\alpha$ either $\psi(c_{02})-1=0$ or $\psi(c_{02})-1< 0$. If $\psi(c_{02})-1=0$ then 
$c_{02}$ is a root of the equation $\psi(c_0)=1$ and we are done. Otherwise the required interval 
is $[c_{01},c_{02}]$.

\textbf{case $2$.} ($\psi(c_{01})-1$ is negative at $c_{01}$): Choose a point 
$c_0=c_{02}~(0<c_{02}<c_{01})$. With similar reason as in case 1, either $\psi(c_{02})-1=0$ or 
$\psi(c_{02})-1>0$. If $\psi(c_{02})-1=0$ then we are done, otherwise the required interval is 
$[c_{02},c_{01}]$.

In this way without computing the exact value of $\psi(c_0)-1$ we obtain an interval 
$[c_{01},c_{02}]$ at the end points of which $\psi(c_0)-1$ take values with opposite signs and 
proceed for finding a solution of $\psi(c_0)-1$. In the following section we describe the smoothing 
gradient technique which we implemented for approximately computing $\psi(c_0)-1$.

\subsection{Smoothing Gradient Technique for solving finite mini-max}
\label{SS1}

In the literature there are several smoothing techniques which may be used for solving a finite 
mini-max problem. Among those techniques we choose one for solving our finite mini-max optimization 
problem in which the function $\psi(c_0)-1$ remains bounded for each $c_0\in\mathbb R$. The 
technique uses a smoothing function (given in~(\ref{E})) to approximate the underlying non-smooth 
objective function $L(x,c_0)$. A smoothing function for a given non-smooth continuous function may 
be defined as follows.

\begin{definition}~\cite{PJD14}
 Let, $f:\mathbb R^n\rightarrow \mathbb R$ be a continuous non-smooth function. We call 
$\tilde{f}:{\mathbb R^{n}\times\mathbb R_+}\rightarrow\mathbb R$ a smoothing function of $f$ if 
$\tilde{f}(.,\mu)$ is continuously differntiable in $\mathbb R^{n}$ for every $\mu\in\mathbb R^+$ 
and $$\displaystyle\lim_{z\rightarrow x,\mu\downarrow 0}\tilde{f}(z,\mu)=f(x)$$
for any $x\in \mathbb R^{n}$.
\end{definition}

To solve the root finding problem we require an estimation for $\displaystyle\min_x L(x,c_0)$ 
$=$ $\displaystyle\min_x$ $\displaystyle\max_k\left\{\frac{f_k(x)}{c_k}\right\}$ $(c_0\in 
[c_{01},c_{02}])$, where $L(x,c_0)$ is a non-smooth continuous function. We consider the smoothing 
function for $\mathbb L(x,c_0)$ as follows:
\begin{equation}
\label{E}
\tilde{\mathbb L}(x,\mu,c_0) = 
\mu\log\displaystyle\sum_{k=1}^{r}\exp\left\{\frac{1}{\mu}\times\frac{f_k(x)}{c_k}\right\}. 
\end{equation}
The function $\tilde{\mathbb L}(x,\mu,c_0)$ provides a good estimation for $\mathbb L(x,c_0)$ since 
the following inequality holds.
\begin{theorem}
\label{T1}
$\mathbb L(x,c_0)\leq\tilde{\mathbb L}(x,\mu,c_0)\leq\mathbb L(x,c_0)+\mu\log r$, for 
$\mu>0$.
\end{theorem}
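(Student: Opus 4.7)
The plan is to view this as the standard log-sum-exp approximation of the max function, applied to the finite collection of scalars $a_k(x,c_0) := f_k(x)/c_k$. Writing $M := \mathbb L(x,c_0) = \max_k a_k(x,c_0)$, the two inequalities come from sandwiching $\sum_k e^{a_k/\mu}$ between one and $r$ copies of its largest summand $e^{M/\mu}$, and then applying the monotonic, order-preserving map $t \mapsto \mu \log t$ (legitimate because $\mu>0$).

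For the lower bound, I would let $k^*$ be an index achieving the maximum in $\mathbb L(x,c_0)$ and observe that, since every exponential is positive,
\begin{equation*}
\sum_{k=1}^{r}\exp\!\left(\tfrac{1}{\mu}\tfrac{f_k(x)}{c_k}\right) \;\geq\; \exp\!\left(\tfrac{1}{\mu}\tfrac{f_{k^*}(x)}{c_{k^*}}\right) \;=\; \exp(M/\mu).
\end{equation*}
Multiplying the logarithm of both sides by $\mu>0$ gives $\tilde{\mathbb L}(x,\mu,c_0)\geq M = \mathbb L(x,c_0)$.

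For the upper bound, each summand satisfies $\exp(a_k/\mu)\leq \exp(M/\mu)$ by monotonicity of the exponential, so
\begin{equation*}
\sum_{k=1}^{r}\exp\!\left(\tfrac{1}{\mu}\tfrac{f_k(x)}{c_k}\right) \;\leq\; r\,\exp(M/\mu),
\end{equation*}
and applying $\mu\log(\cdot)$ yields $\tilde{\mathbb L}(x,\mu,c_0)\leq M+\mu\log r = \mathbb L(x,c_0)+\mu\log r$.

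There is no real obstacle here; the argument is purely algebraic and depends only on the positivity of $\mu$, the monotonicity of $\exp$ and $\log$, and the fact that the sum over $r$ nonnegative terms is bounded below by its largest term and above by $r$ times that term. The only thing worth flagging is an apparent indexing mismatch between the range $0\leq k\leq r$ in the definition of $\mathbb L$ and the range $1\leq k\leq r$ in the definition of $\tilde{\mathbb L}$; in writing the proof I would interpret $M$ as the maximum over exactly those indices that appear in the smoothing sum, so that the sandwich argument goes through term-by-term without a stray $k=0$ contribution.
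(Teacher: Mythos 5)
Your proof is correct and is essentially the paper's own argument: the paper subtracts $\mathbb L(x,c_0)$ inside the exponent and bounds the resulting sum between $1$ and $r$, which is algebraically the same as your sandwiching of $\sum_k e^{a_k/\mu}$ between $e^{M/\mu}$ and $r\,e^{M/\mu}$ followed by $\mu\log(\cdot)$. Your remark on the $0\leq k\leq r$ versus $1\leq k\leq r$ indexing mismatch is apt, as the paper itself switches between the two ranges without affecting the bound.
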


\begin{proof}~\\ 
\begin{tabular}{p{3.5cm}p{.5cm}p{9cm}}\multicolumn{3}{c}{~}\\
 $\tilde{\mathbb L}(x,\mu,c_0)-\mathbb L(x,c_0)$ & $=$ &
$\mu\log\displaystyle\sum_{k=1}^{r}\exp\left\{\frac{1}{\mu}\times\frac{f_k(x)}{c_k}\right\}$
$-$ $\mu\times\Big(\frac{1}{\mu}\times\mathbb L(x,c_0)\Big)$\\[1mm]
~ & $=$ & 
$\mu\log\displaystyle\sum_{k=1}^{r}\exp\left\{\frac{1}{\mu}\times\frac{f_k(x)}{c_k}\right\}$
$-$ $\mu\log\exp\Big(\frac{1}{\mu}\times\mathbb L(x,c_0)\Big)$\\[1mm]
~ & $=$ & 
$\mu\log\left\{\displaystyle\sum_{k=1}^{r}\exp\left\{\frac{1}{\mu}\times\frac{f_k(x)}{c_k}
\right\}/\exp\Big(\frac{1}{\mu}\times\mathbb L(x,c_0)\Big)\right\}$\\[1mm]
~ & $=$ & 
$\mu\log\left\{\displaystyle\sum_{k=1}^{r}\exp\left\{\frac{1}{\mu}\times\frac{f_k(x)}{c_k}
-\frac{1}{\mu} \times \mathbb L(x,c_0)\right\}\right\}$\\[1mm]
~ & $=$ & 
$\mu\log\left\{\displaystyle\sum_{k=1}^{r}\exp\Big(\frac{1}{\mu}\times\left\{\frac{f_k(x)}{c_k}
-\mathbb L(x,c_0)\right\}\Big)\right\}$
\end{tabular}

Since $\frac{f_k(x)}{c_k}\leq \mathbb L(x,c_0)=\displaystyle\max_{0\leq k\leq 
r}\frac{f_k(x)}{c_k}$, at any point $x\in \mathbb R^{nd}$,

$$\exp\Big(\frac{1}{\mu}\times\left\{\frac{f_k(x)}{c_k}-\mathbb L(x,c_0)\right\}\Big)\leq 1$$
for each $k$. This gives,
$$\displaystyle\sum_{k=0}^{r}\exp\Big(\frac{1}{\mu}\times\left\{\frac{f_k(x)}{c_k}
-\mathbb L(x,c_0)\right\}\Big)\leq r.$$
Hence, 
\begin{equation}
\tilde{\mathbb L}(x,\mu,c_0)-\mathbb L(x,c_0)\leq \mu \log r\Rightarrow
 \tilde{\mathbb L}(x,\mu,c_0)\leq\mathbb L(x,c_0)+\mu \log r. 
\label{eq11}
\end{equation}
Also for each $x\in \mathbb R^{nd}$ there is some $k=m$ for which 
$$\frac{f_m(x)}{c_m}=\mathbb L(x,c_0)=\displaystyle\max_{0\leq k\leq r}\frac{f_k(x)}{c_k}.$$
Thus we get, 
$$\exp\Big(\frac{1}{\mu}\times\left\{\frac{f_m(x)}{c_m}-\mathbb L(x,c_0)\right\}\Big)=1$$
which gives
$$\displaystyle\sum_{k=0}^{r}\exp\Big(\frac{1}{\mu}\times\left\{\frac{f_k(x)}{c_k}
-\mathbb L(x,c_0)\right\}\Big)\geq 1$$
or, $$\mu\log\Big(\displaystyle\sum_{k=0}^{r}\exp\Big(\frac{1}{\mu}\times\left\{\frac{f_k(x)}{c_k}
-\mathbb L(x,c_0)\right\}\Big)\Big)\geq 0$$
i.e., 
\begin{equation}
\tilde{\mathbb L}(x,\mu,c_0)\geq\mathbb L(x,c_0). 
\label{eq12}
\end{equation}
Combining Equation~\ref{eq11} and Equation~\ref{eq12} we get the given inequality.
\end{proof}

We describe the smoothing gradient algorithm below which will produce a clarkr stationary point  
(Appendix~\ref{app2}) $x_0$ for $\displaystyle\min_x L(x,c_0).$

\begin{algorithm}[H]
\begin{algorithmic}[1]
\caption{}
\Procedure{SmoothingGradientAlgorithm~\cite{PJD14}:}{}
\label{A1}
\State Choose $\sigma_1\in (0,0.5); \sigma_2\in (\sigma_1,1)$; $\gamma > 0; \gamma_1\in(0,1)$; 
$x_0\in \mathbb R^{nd}$; 
\State Let $i=0$; $\mu_0\gets$ A positive number chosen arbitrarily; $\epsilon\gets 10^{-4};$
\While {$\mu_i\geq \epsilon$}
  \State  $g_i\gets\triangledown\tilde{\mathbb L}(x_i,\mu_i,c_0)$;
  \State  $d_i\gets(-g_i)$;
  \State $\alpha\gets\Call{WolfLineSearchAlgorithm}{x_i,d_i,\mu_i,\sigma_1,\sigma_2}$
  \State  $x_{i+1}\gets x_i+\alpha*d_i$;
  \If {$||\triangledown\tilde{\mathbb L}(x_{i+1},\mu_i,c_0)||\geq\gamma*\mu_i $}
      \State $\mu_{i+1}\gets\mu_i;$
  \Else 
      \State $\mu_{i+1}\gets\gamma_1*\mu_i;$
  \EndIf
  \State  $i=i+1;$
\EndWhile
\EndProcedure
\end{algorithmic}
\end{algorithm}

In each step of the \Call{Smoothing Gradient Algorithm}{} we use 
\Call{WolfLineSearchAlgorithm}{}~\cite{url1} for finding $\alpha$ for the next iteration. The 
algorithm searches for finding the maximum value of the constant $\alpha$ satisfying the following 
two conditions. 
\begin{enumerate}
 \item $\tilde{\mathbb L}(x_i+\alpha d_i,\mu_i,c_0)\leq\tilde{\mathbb L}(x_i,\mu_i,c_0)
                           +\sigma_1\alpha g^T_i d_i$
 \item $\triangledown\tilde{\mathbb L}(x_i+\alpha d_i,\mu_i,c_0)^Td_i\geq\sigma_2 g^T_i d_i$
\end{enumerate}
where $x_i,d_i,\mu_i,\sigma_1,\sigma_2$ are from the smoothing gradient algorithm. The first 
condition ensures that at the $i+1$-th step of the iteration the functional value $\tilde{\mathbb 
L}(x_i+\alpha d_i,\mu_i,c_0)$ is smaller than $\tilde{\mathbb L}(x_i,\mu_i,c_0)$ 
(since $\sigma_1\alpha g^T_i d_i=-\sigma_1\alpha||g_i||^2$ is negative). Here we show that an 
$\alpha$ satisfying this condition always exists. From the \textit{Taylor theorem for multivariate 
functions}~\cite{MAA} of $\tilde{\mathbb L}(x_i+\alpha d_i,\mu_i,c_0)$ we get 
$$\tilde{\mathbb L}(x_i+\alpha d_i,\mu_i,c_0)=\tilde{\mathbb L}(x_i,\mu_i,c_0)
                           +\alpha g^T_i d_i+O(\alpha^2)~\cite{PJD14}$$
Therefore\\
\begin{tabular}{p{1cm}p{9cm}}\multicolumn{2}{c}{~}\\[1mm]
~  & $\tilde{\mathbb L}(x_i,\mu_i,c_0)+\alpha g^T_i d_i+O(\alpha^2)\leq \tilde{\mathbb 
L}(x_i,\mu_i,c_0)+\sigma_1\alpha g^T_i d_i$\\[1mm]
if, & $\alpha g^T_i d_i+O(\alpha^2)\leq\sigma_1\alpha g^T_i d_i$\\[1mm]
i.e. if, & $-(1-\sigma_1)\alpha||g_i||^2+O(\alpha^2)\leq 0,~or,(1-\sigma_1)\alpha||g_i||^2\geq 
O(\alpha^2).$
\end{tabular}
\vspace{2mm}

Using the Taylor's theorem it may be concluded that such an $\alpha$ always exists. Condition $(2)$ 
of \Call{WolfLineSearchAlgorithm}{} has been inserted to keep $\alpha$ sufficiently large as such 
the slope of $\tilde{\mathbb L}(x_i+\alpha d_i,\mu_i,c_0)$ remains at least $\sigma_2$ 
$(\sigma_1<\sigma_2<1)$ times larger than the slope of $\tilde{\mathbb L}(x_i,\mu_i,c_0)$ 
~\cite{url1}.                                
\begin{algorithm}[H]
\begin{algorithmic}[1]
\caption{}
\Procedure{WolfLineSearchAlgorithm($x_i,d_i,\mu_i,\sigma_1,\sigma_2$)~\cite{url1}:}{}
\label{p2}
\State Let $\alpha\gets0$; $t\gets 1$; $\beta\gets\infty$;
\State \textbf{Repeat}
\If {$\tilde{\mathbb L}(x_i+t d_i,\mu_i,c_0)>\tilde{\mathbb L}(x_i,\mu_i,c_0)
                           +\sigma_1 t g^T_i d_i$}
  \State Set $\beta\gets t;$ $t\gets\frac{1}{2}(\alpha+\beta)$;
\ElsIf{$\triangledown\tilde{\mathbb L}(x_i+t d_i,\mu_i,c_0)^Td_i<\sigma_2 g^T_i d_i$}
  \State $\alpha\gets t$; 
  \If {$\beta=\infty$} 
    \State $t\gets2\alpha$;
  \Else 
    \State $t=\frac{1}{2}(\alpha+\beta)$;
  \EndIf
\Else
  \State return $\alpha$;
\EndIf                           
\State\textbf{End Repeat}
\EndProcedure
\end{algorithmic}
\end{algorithm}

\section{Convergence analysis of the root finding method}
\label{S5}
The convergence of the root finding method inherently depends on the convergence of the 
\Call{SmoothingGradientAlgorithm}{}. 

\subsection{Convergence of SmoothingGradientAlgorithm}
Let $\{x_i\}$ and $\{\mu_i\}$ are the sequences generated by the smoothing gradient algorithm.
Towards proving the convergence of the method let us first consider the set 
                             $$S=\{i~|~\mu_{i+1}=\gamma_1\mu_i\}$$
in the smoothing gradient algorithm.                             
\begin{lemma}
 The set $S$ can not be finite.
 \label{L1}
\end{lemma}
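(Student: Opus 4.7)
The plan is to argue by contradiction: suppose $S$ is finite. Since $\mu_{i+1}$ either equals $\mu_i$ (when $i \notin S$) or equals $\gamma_1 \mu_i$ (when $i \in S$), finiteness of $S$ forces the sequence $\{\mu_i\}$ to stabilize: there is some $N$ and some $\bar\mu > 0$ such that $\mu_i = \bar\mu$ for every $i \geq N$. According to the algorithm, staying out of $S$ for all $i \geq N$ means the ``else'' branch is never triggered, i.e.
$$\|\nabla \tilde{\mathbb L}(x_{i+1}, \bar\mu, c_0)\| \geq \gamma\bar\mu \qquad \text{for all } i\geq N.$$
So my goal is to derive a contradiction to this uniform lower bound on the gradient.

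For the contradictory step I would run the standard Wolfe line-search / Zoutendijk argument on the fixed smoothed objective $h(x) := \tilde{\mathbb L}(x,\bar\mu,c_0)$. The first Wolfe condition, applied with $d_i = -g_i = -\nabla h(x_i)$, gives
$$h(x_{i+1}) \leq h(x_i) - \sigma_1 \alpha_i \|g_i\|^2,$$
so $\{h(x_i)\}_{i\geq N}$ is monotone decreasing. By Theorem~\ref{T1} we have $\tilde{\mathbb L}(x,\bar\mu,c_0) \geq \mathbb L(x,c_0) \geq 0$, so $h$ is bounded below, and the sequence $\{h(x_i)\}$ converges. Consequently $\alpha_i \|g_i\|^2 \to 0$. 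Combining this with the second Wolfe condition and the Lipschitz continuity of $\nabla h$ on the sublevel set $\{x : h(x) \leq h(x_N)\}$ (which is where all later iterates live; note that $h$ is smooth because $\mu = \bar\mu > 0$ is fixed), one obtains a uniform lower bound $\alpha_i \geq \alpha_{\min} > 0$ via the usual Zoutendijk derivation. Hence $\|g_i\|^2 \to 0$.

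This contradicts the standing inequality $\|g_i\| \geq \gamma\bar\mu > 0$ for all $i \geq N+1$, so $S$ must be infinite.

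The main obstacle is the Zoutendijk-style step: we need the sublevel set to be bounded (or at least that the gradient of the smoothed objective is Lipschitz along the iterates). Boundedness will follow from the fact that $\tilde{\mathbb L}(x,\bar\mu,c_0) \geq \mathbb L(x,c_0) \geq f_0(x)/c_0 \to \infty$ as $\|x\|\to\infty$ (using condition~(2) of Section~\ref{ss1}), which gives coercivity of $h$ and therefore compactness of its sublevel sets. Once the iterates are confined to a compact set, standard smoothness of the log-sum-exp composition with polynomials yields the Lipschitz gradient needed to close the argument.
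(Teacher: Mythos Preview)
Your argument is correct and follows the same contradiction scheme as the paper: assume $S$ finite, freeze $\mu_i \equiv \bar\mu$, record the standing lower bound $\|\nabla\tilde{\mathbb L}(x_{i+1},\bar\mu,c_0)\| \geq \gamma\bar\mu$, and then use the Wolfe conditions on the fixed smooth objective to force the gradient toward zero. The technical execution differs in two places. For boundedness of the iterates you invoke coercivity of $h$ (via $\tilde{\mathbb L}(x,\bar\mu,c_0)\geq f_0(x)/c_0\to\infty$) and compactness of sublevel sets, whereas the paper simply appeals to the compact box $K$ built in Section~\ref{S4} and asserts $x_j\in K$. For the final contradiction you run the standard Zoutendijk step (Lipschitz gradient on a compact set $\Rightarrow$ $\alpha_i\geq(1-\sigma_2)/L$, hence $\|g_i\|\to 0$); the paper instead expands $\nabla h(x_{j+1})^T d_j$ by Taylor to show the ratio $\nabla h(x_{j+1})^T d_j \big/ \nabla h(x_j)^T d_j$ tends to $1$, contradicting the second Wolfe inequality which caps it at $\sigma_2<1$. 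Your route is the textbook one and is a bit cleaner; the paper's Taylor argument reaches the same conclusion without naming a Lipschitz constant but relies on the same smoothness on a compact set.
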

\begin{proof}
If $S$ is a finite set then from the smoothing gradient algorithm we get, there exists an integer 
$i'$ such that for all $i>i'$, 
\begin{equation}
 ||\triangledown\tilde{\mathbb L}(x_{i+1},\mu_i,c_0)||\geq\gamma*\mu_i,
 \label{1}
\end{equation}
and $\mu_i=\mu_{i'}=\mu$ (say). If this is true then our claim is that,
\begin{equation}
 \displaystyle\liminf_{i\rightarrow\infty}||\triangledown\tilde{\mathbb L}(x_{i+1},\mu,c_0)||=0.
 \label{3}
\end{equation}
Suppose that~(\ref{3}) does not hold. Then there exists a sub-sequence of $\{g_i\}$ 
($g_i=\triangledown\tilde{\mathbb L}(x_{i+1},\mu,c_0)$), say $\{g_j\}$, for which
\begin{equation}
 ||g_j||\geq\epsilon~~for~~some~~\epsilon > 0~and~\forall~j.
 \label{4}
\end{equation}
But in Wolfe Line Search Algorithm $d_i$ $(=-g_i)$ is always a decent direction for the 
gradient~\cite{url1}. Thus the sequence $\Big\{\tilde{\mathbb L}(x_j, ~\mu, ~c_0)\Big\}$ generated 
by the algorithm is a monotonically decreasing sequence. Also $x_j\in K$ ($\forall$ $j$) where $K$ 
is the compact set (we have chosen in our paper). Therefore the sequence $\Big\{\tilde{\mathbb 
L}(x_j,~\mu,~c_0)\Big\}$ will be bounded below and convergent in $K$. Hence we get,
$$\tilde{\mathbb L}(x_j,\mu,c_0)-\tilde{\mathbb L}(x_{j+1},\mu,c_0)\rightarrow 0, 
~as~j\rightarrow\infty.$$
Using this and the condition $(1)$ of wolfe line search we obtain,
$$-g_j^T\alpha_j d_j\leq \frac{1}{\sigma_1}\Big(\tilde{\mathbb L}(x_j,\mu,c_0)-\tilde{\mathbb 
L}(x_{j+1},\mu,c_0)\Big)\rightarrow 0~~~~as~j\rightarrow\infty.$$
This gives $-g_j^T\alpha_j d_j\rightarrow 0~~as~~j\rightarrow\infty$. But $-g_j^T\alpha_j d_j= 
||g_j||.||\alpha_j d_j||\cos\theta$ where $\theta=\pi$ is the angle in between $g_j$ and $d_j$.
Thus we get 
\begin{equation}
||\alpha_j d_j||\rightarrow 0~since~||g_j||\geq\epsilon.
\label{5} 
\end{equation}
Since $x_{j+1}=x_j+\alpha_j d_j$ and $\triangledown\tilde{\mathbb L}$ differntiable on $K$, the 
Taylor's Theorem~\cite{MAA} on $\triangledown\tilde{\mathbb L}(x_{j+1},\mu,c_0)$ gives that,
 $$\triangledown\tilde{\mathbb L}(x_j+\alpha_j d_j,\mu,c_0)^T\alpha_j d_j
      =\triangledown\tilde{\mathbb L}(x_j,\mu,c_0)^T\alpha_j d_j
       +\alpha_j d_j\triangledown^2\tilde{\mathbb L}(x_j,\mu,c_0)^T\alpha_j d_j
        +o(||\alpha_j d_j||)$$
or,        
 $$\triangledown\tilde{\mathbb L}(x_j+\alpha_j d_j,\mu,c_0)^T\alpha_j d_j
      =\triangledown\tilde{\mathbb L}(x_j,\mu,c_0)^T\alpha_j d_j
       +o(||\alpha_j d_j||).$$
This gives, 
\begin{equation}
\displaystyle\lim_{j\rightarrow\infty}
\frac{\triangledown\tilde{\mathbb L}(x_{j+1},\mu,c_0)^T \alpha_j d_j} 
     {\triangledown\tilde{\mathbb L}(x_j,\mu,c_0)^T\alpha_j d_j}=1,
 \label{E2}
\end{equation}
since from~(\ref{5}) $o(||\alpha_j d_j||)\approx 0$ for $j\rightarrow\infty$. But~(\ref{E2}) gives 
a contradiction since from the second condition of wolfe line search algorithm we get the following

\begin{center}
\begin{tabular}{p{.8cm}p{2.9cm}p{.3cm}p{3cm}}\multicolumn{4}{c}{~}\\
~ & $\triangledown\tilde{\mathbb L}(x_{j+1},\mu,c_0)^T d_j$ & $\geq$ & $\sigma_2g_j^Td_j$\\[1.3mm]
or, & $-g(x_{j+1})^Tg(x_{j})$ & $\geq$ & $-\sigma_2 g(x_{j})^Tg(x_{j})$\\\\[.5mm]
or, & $g(x_{j+1})^Tg(x_{j})$ & $\leq$ & $\sigma_2 g(x_{j})^Tg(x_{j})$\\\\[1mm]
i.e., & $\frac{g(x_{j+1})^Tg(x_j)}{g(x_j)^Tg(x_j)}$ & $\leq$ & $\sigma_2<1$\\\\[1mm]
or, & $\frac{\triangledown\tilde{\mathbb L}(x_{j+1},\mu,c_0)^T \alpha_j d_j}
              {\triangledown\tilde{\mathbb L}(x_j,\mu,c_0)^T\alpha_j d_j}$ 
              & $\leq$ & $\sigma_2<1$
\end{tabular}
\end{center}
Therefore $\displaystyle\lim_{j\rightarrow\infty}\frac{\triangledown\tilde{\mathbb 
L}(x_{j+1},\mu,c_0)^T \alpha_j d_j}{\triangledown\tilde{\mathbb 
L}(x_j,\mu,c_0)^T\alpha_jd_j}\neq1$ and $||g_j||\geq\epsilon$ does not hold and we get,
\begin{equation}
 \displaystyle\liminf_{i\rightarrow\infty}||\triangledown\tilde{\mathbb L}(x_{i+1},\mu,c_0)||=0.
 \label{11} 
\end{equation}
But Equation~(\ref{11}) contradicts Equation~(\ref{1}) since $\gamma\mu$ is a constant. Thus $S$ 
can not be a finite set.  
\end{proof}

\begin{lemma}
 For the sequence $\{x_i\}$ and $\{\mu_i\}$ generated by the smoothing gradient algorithm the 
 following conditions hold, 
 \begin{enumerate}
  \item $\displaystyle\lim_{i\rightarrow\infty} \mu_i=0.$
  \item $\displaystyle\lim_{i\rightarrow\infty}||\triangledown\tilde{\mathbb L} 
        (x_{i+1},\mu_i,c_0)|| =0$.  
 \end{enumerate}
 \label{l2}
\end{lemma}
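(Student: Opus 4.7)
The plan is to verify the two assertions in order, leveraging Lemma~\ref{L1} as the key ingredient.

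For part $(1)$, I would argue directly from the update rule in \Call{SmoothingGradientAlgorithm}{}: at each step, $\mu_{i+1}$ is either $\mu_i$ or $\gamma_1\mu_i$ with $\gamma_1\in(0,1)$, so $\{\mu_i\}$ is a non-increasing sequence of positive reals. By Lemma~\ref{L1}, the index set $S$ is infinite, so the contraction factor $\gamma_1$ is applied infinitely often; an easy induction along the indices of $S$ gives $\mu_{i_k}=\gamma_1^{\,k-1}\mu_{i_1}\to 0$, and monotonicity extends this to the full sequence.

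For part $(2)$, I would split the indices into $S$ and its complement. For $i\in S$, the defining condition of the ``Else'' branch gives $\|\nabla\tilde{\mathbb L}(x_{i+1},\mu_i,c_0)\|<\gamma\mu_i$, and part $(1)$ finishes this subsequence. For $i\notin S$ the algorithm supplies only the lower bound $\|\nabla\tilde{\mathbb L}(x_{i+1},\mu_i,c_0)\|\ge\gamma\mu_i$, so an upper-bound argument must be imported. I would argue by contradiction, adapting the Zoutendijk-style reasoning used in Lemma~\ref{L1}: assume a subsequence $\{j_n\}\not\subset S$ with $\|g_{j_n}\|\ge\varepsilon>0$. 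Each $j_n$ lies in a maximal block between consecutive elements of $S$, on which $\mu_i$ is a constant $\mu^{(k)}$ and the iterates follow gradient descent with Wolfe line search applied to the fixed objective $\tilde{\mathbb L}(\cdot,\mu^{(k)},c_0)$. The first Wolfe condition yields the per-step decrease $\tilde{\mathbb L}(x_i,\mu^{(k)},c_0)-\tilde{\mathbb L}(x_{i+1},\mu^{(k)},c_0)\ge\sigma_1\alpha_i\|g_i\|^2$; boundedness below of $\tilde{\mathbb L}$ on the compact set $K$ (together with the monotone dependence of $\tilde{\mathbb L}(x,\mu,c_0)$ on $\mu$ at block transitions) makes the total decrease summable, so $\alpha_i\|g_i\|^2\to 0$. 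A lower bound $\alpha_i\ge(1-\sigma_2)/L_k$ obtained from the second Wolfe condition and the Lipschitz continuity of $\nabla\tilde{\mathbb L}(\cdot,\mu^{(k)},c_0)$ on $K$ then forces $\|g_{j_n}\|\to 0$, a contradiction.

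The hardest step will be this Zoutendijk-style argument for indices $i\notin S$. Two subtleties complicate matters: the smoothed objective $\tilde{\mathbb L}(\cdot,\mu,c_0)$ itself changes from block to block as $\mu$ shrinks, and the Lipschitz constant of its gradient on $K$ deteriorates like $1/\mu$ as $\mu\downarrow 0$, so the lower bound on $\alpha_i$ coming from the second Wolfe condition degrades with $\mu_i$. Closing the argument therefore requires tracking the block-wise Lipschitz constants against the contraction rate $\gamma_1$ of $\mu_i$, ensuring that the total decrease in $\tilde{\mathbb L}$ across block transitions is controlled by the monotonicity in $\mu$ and by the sandwich inequality of Theorem~\ref{T1}. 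Once this book-keeping is carried out, the contradiction closes and both parts of the lemma follow.
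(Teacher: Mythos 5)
Your part (1) coincides with the paper's argument: Lemma~\ref{L1} makes the set $S$ infinite, $\{\mu_i\}$ is non-increasing, and the factor $\gamma_1<1$ is applied infinitely often, so $\mu_i\to 0$. For part (2), however, there is a genuine gap, and it sits exactly where you defer the work. The paper's own proof is a one-liner that invokes the algorithm's threshold $\|\triangledown\tilde{\mathbb L}(x_{i+1},\mu_i,c_0)\|\le\gamma\mu_i$ together with part (1); strictly, that inequality is available only at indices $i\in S$ (the else-branch), so what it really yields is that the gradient norms tend to $0$ along the infinite subsequence $S$. You correctly noticed this subtlety --- for $i\notin S$ the algorithm gives only the reverse inequality $\|\triangledown\tilde{\mathbb L}(x_{i+1},\mu_i,c_0)\|\ge\gamma\mu_i$ --- but the Zoutendijk-style repair you sketch for those indices cannot be closed in the way you describe.

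Concretely: on a block where $\mu$ is frozen at $\mu^{(k)}$, the two Wolfe conditions together with the Lipschitz constant $L_k$ of $\triangledown\tilde{\mathbb L}(\cdot,\mu^{(k)},c_0)$ on $K$ give $\alpha_i\ge(1-\sigma_2)/L_k$ and hence a guaranteed decrease of order $\sigma_1(1-\sigma_2)\varepsilon^2/L_k$ at any ``bad'' index with $\|g_i\|\ge\varepsilon$. But for the log-sum-exp smoothing in~(\ref{E}) one has $L_k=O(1/\mu^{(k)})$ on $K$, while $\mu^{(k)}\approx\gamma_1^{k}\mu_0$ decays geometrically; so the guaranteed decrease per bad step is only $O(\varepsilon^2\mu^{(k)})$, and the total guaranteed decrease over all bad steps is dominated by a convergent geometric series. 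Boundedness below of $\tilde{\mathbb L}$ on $K$ therefore produces no contradiction with $\|g_{j_n}\|\ge\varepsilon$ occurring infinitely often outside $S$: the ``book-keeping'' you postpone is not a technicality but precisely the point at which the argument fails. What your reasoning (and the paper's) actually delivers is $\mu_i\to0$ together with $\|\triangledown\tilde{\mathbb L}(x_{i+1},\mu_i,c_0)\|<\gamma\mu_i\to0$ for $i\in S$, i.e.\ a liminf-type conclusion along the infinite set $S$, which is the standard result for smoothing gradient methods; the full limit over all $i$ asserted in item (2) is not established by your proposed route, and you should either restrict the statement to the subsequence $S$ or supply an argument of a different kind for the indices outside $S$.
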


\begin{proof}
We use Lemma~\ref{L1} to prove both these results.
\begin{enumerate}
 \item Since $S$ is an infinite set $\mu_{i+1}=\mu_i\gamma_1$ $(\gamma_1<1)$ for infinitely many 
$i$-s in the smoothing gradient algorithm. Therefore $\displaystyle\lim_{i\rightarrow\infty} 
\mu_i=0.$  
 \item $\displaystyle\lim_{i\rightarrow\infty}||\triangledown\tilde{\mathbb L}(x_{i+1},\mu_i,c_0)|| 
\leq\gamma\displaystyle\lim_{i\rightarrow\infty}\mu_i=0.$
\end{enumerate}
\end{proof}
Since in Lemma~\ref{l2}, $\displaystyle\lim_{i\rightarrow\infty}\mu_i=0$ therefore 
$\displaystyle\lim_{\mu_i\downarrow 0}\tilde{\mathbb L}(x_{i+1},\mu_i,c_0)=\mathbb L(x_{i+1},c_0)$ 
at each $x_{i+1}$. The following theorem can be proved by using Lemma~\ref{l2}.
\begin{theorem}~\cite{CF83, CZ10}
 Any point $x_0$ generated by the smoothing gradient algorithm is a clarkr stationary point of 
$\mathbb L$ at $c_0\in\mathbb R$. 
\label{T100}
\end{theorem}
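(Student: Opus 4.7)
The plan is to combine the two conclusions of Lemma~\ref{l2} with a standard subgradient-consistency argument for the smoothing function $\tilde{\mathbb L}$ defined in~(\ref{E}). First I would observe that the sequence $\{x_{i+1}\}$ produced by \Call{SmoothingGradientAlgorithm}{} lies in the compact set $K$ that was fixed earlier, so by Bolzano--Weierstrass it has a convergent subsequence $\{x_{i_k+1}\}$ with limit, say, $x_0 \in K$. Along this same subsequence, Lemma~\ref{l2} gives $\mu_{i_k}\downarrow 0$ and $\|\nabla_x \tilde{\mathbb L}(x_{i_k+1}, \mu_{i_k}, c_0)\| \to 0$. The job is then to show that this forces $0 \in \partial_C \mathbb L(x_0, c_0)$, where $\partial_C$ denotes the Clarke generalized gradient in $x$.

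The key ingredient I would invoke is the \emph{gradient-consistency} property of the log-sum-exp smoothing of a finite maximum. For the particular form
\begin{equation*}
\tilde{\mathbb L}(x,\mu,c_0) = \mu \log \sum_{k=1}^{r} \exp\!\left(\frac{1}{\mu}\cdot\frac{f_k(x)}{c_k}\right),
\end{equation*}
a direct computation gives $\nabla_x \tilde{\mathbb L}(x,\mu,c_0) = \sum_{k} \lambda_k(x,\mu)\, \nabla (f_k/c_k)(x)$, where $\lambda_k(x,\mu) \ge 0$, $\sum_k \lambda_k = 1$, and the weights $\lambda_k$ concentrate on the active indices $\mathcal A(x)=\{k : f_k(x)/c_k = \mathbb L(x,c_0)\}$ as $\mu\downarrow 0$ and $x\to x_0$. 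Hence any accumulation point of $\nabla_x \tilde{\mathbb L}(x_{i_k+1}, \mu_{i_k}, c_0)$ is a convex combination of $\{\nabla(f_k/c_k)(x_0) : k\in\mathcal A(x_0)\}$, and this convex hull is exactly $\partial_C \mathbb L(\cdot,c_0)$ evaluated at $x_0$ (since each $f_k$ is $C^1$ and $\mathbb L$ is the pointwise maximum of finitely many smooth functions, so its Clarke subdifferential coincides with this convex hull by the standard max-function calculus).

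Putting the pieces together, extract a further subsequence along which the weights $\lambda_k(x_{i_k+1},\mu_{i_k})$ converge to some $\lambda_k^*\ge 0$ with $\sum_k \lambda_k^*=1$ and $\lambda_k^*=0$ for $k\notin \mathcal A(x_0)$; by continuity of $\nabla (f_k/c_k)$ and the bound from Lemma~\ref{l2}(2),
\begin{equation*}
0 \;=\; \lim_{k\to\infty} \nabla_x \tilde{\mathbb L}(x_{i_k+1},\mu_{i_k},c_0) \;=\; \sum_{k\in\mathcal A(x_0)} \lambda_k^*\, \nabla (f_k/c_k)(x_0) \;\in\; \partial_C \mathbb L(x_0,c_0),
\end{equation*}
which is exactly the Clarke stationarity condition claimed.

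The main obstacle I expect is the gradient-consistency step: one must justify carefully that as $(x,\mu)\to(x_0,0)$ the weights $\lambda_k(x,\mu)$ vanish on inactive indices (those with $f_k(x_0)/c_k < \mathbb L(x_0,c_0)$) because the corresponding exponentials are suppressed at rate $\exp(-\Theta(1/\mu))$, while they remain a probability distribution on the active set. This is precisely where the particular choice of smoothing function matters; the rest of the argument is bookkeeping with compactness and continuity. I would cite \cite{PJD14,CZ10} for the concrete gradient-consistency statement rather than re-derive it in detail.
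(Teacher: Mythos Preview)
Your proposal is correct and follows the same route the paper takes: the paper does not give an explicit argument for Theorem~\ref{T100} but simply remarks that it ``can be proved by using Lemma~\ref{l2}'' and cites \cite{CF83,CZ10} for the underlying gradient-consistency machinery, which is precisely the content you have written out (compactness to extract a subsequence, Lemma~\ref{l2} to drive $\mu_i\to 0$ and $\|\nabla_x\tilde{\mathbb L}\|\to 0$, and the log-sum-exp weight-concentration argument to land in $\partial_C\mathbb L(x_0,c_0)$). In other words, you have supplied the details the paper defers to its references, and your only implicit assumption---that the iterates remain in the compact set $K$---is one the paper also makes without further justification (see the proof of Lemma~\ref{L1}).
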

A clarkr stationary point is a solution of $\displaystyle\min_x\mathbb L(x,c_0)$~\cite{CF83}. Hence 
the smoothing gradient algorithm converges to a solution of $\displaystyle\min_x \mathbb L(x,c_0)$.

\subsection{Convergence of the Root Finding Method:}
\vspace{2mm}

\begin{theorem}
Let $\{y_i\}$, $\{\mu_i\}$ and $\{c_i\}$ be the sequences generated by the root finding iterative 
method. As $\{c_i\}$ converges to a root $c^*_0$ of the equation $\psi(c_0)=1$ the following two 
conditions will hold,
\begin{enumerate}
 \item $\displaystyle\lim_{i\rightarrow\infty} \mu_i=0$ and
 \item $\displaystyle\lim_{i\rightarrow\infty}||\triangledown\tilde{\mathbb L}(y_{i+1},\mu_i,c_0)|| 
= 0.$
\end{enumerate}
\end{theorem}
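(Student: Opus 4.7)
The plan is to reduce the statement to the one-shot inner convergence already established by Lemma~\ref{L1} and Lemma~\ref{l2}, and then patch it across the outer iteration in which $c_i$ varies. At every outer step the root-finding method invokes the smoothing gradient algorithm with current target $c_i$, producing iterates $y_i$ and smoothing parameters $\mu_i$; the sequences in the statement are the concatenation of all such inner iterates. Because $\psi$ is continuous and non-increasing (Theorem~\ref{t1}), $c_i\to c_0^*$ implies $\psi(c_i)\to 1$, so that $c_i$ eventually lies in a compact neighborhood of $c_0^*$. This compactness is what allows the reasoning behind Lemma~\ref{L1} and Lemma~\ref{l2} to go through uniformly along the combined sequence.

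First I would prove $\mu_i\to 0$. The smoothing gradient algorithm updates $\mu_{i+1}\in\{\mu_i,\gamma_1\mu_i\}$ with $\gamma_1\in(0,1)$, so it suffices to show that the index set $S=\{i:\mu_{i+1}=\gamma_1\mu_i\}$ is infinite along the concatenated iteration. Suppose not: then there exist $\bar\imath$ and $\bar\mu>0$ such that $\mu_i\equiv\bar\mu$ and $\|\triangledown\tilde{\mathbb L}(y_{i+1},\bar\mu,c_i)\|\ge\gamma\bar\mu$ for all $i\ge\bar\imath$. Repeating the Wolfe-line-search energy-decrease argument of Lemma~\ref{L1}, but now with the slowly varying $c_i$ eventually confined to a compact set on which $\tilde{\mathbb L}$ and $\triangledown\tilde{\mathbb L}$ are jointly (uniformly) continuous, one still obtains $\liminf_i\|\triangledown\tilde{\mathbb L}(y_{i+1},\bar\mu,c_i)\|=0$, a contradiction. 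Hence $S$ is infinite and, because each element of $S$ shrinks $\mu_i$ by the fixed factor $\gamma_1<1$, we conclude $\mu_i\to 0$.

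For the second claim I would split the indices according to the algorithmic transition. For $i\in S$ the update rule enforces $\|\triangledown\tilde{\mathbb L}(y_{i+1},\mu_i,c_i)\|<\gamma\mu_i$, so along this sub-sequence the gradient norms tend to $0$ together with $\mu_i$. For $i\notin S$ the smoothing parameter is frozen while a Wolfe step is taken; the first Wolfe condition combined with monotone boundedness of $\tilde{\mathbb L}$ on the compact box $K$ used earlier forces $\|\alpha_i d_i\|\to 0$, and a Taylor expansion of $\triangledown\tilde{\mathbb L}$ along $d_i$, exactly as in the proof of Lemma~\ref{L1}, rules out any persistent lower bound on the gradient. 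Combining the two cases with $\mu_i\to 0$ yields $\|\triangledown\tilde{\mathbb L}(y_{i+1},\mu_i,c_0)\|\to 0$, which is claim $(2)$.

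The main obstacle I anticipate is precisely this interaction between the outer drift of $c_i$ and the inner descent reasoning, because Lemmas~\ref{L1} and~\ref{l2} are stated for a fixed $c_0$. The safeguard is that $c_i$ converges and is ultimately trapped in a compact interval around $c_0^*$; on $K\times[\epsilon,\mu_0]\times\{c:|c-c_0^*|\le\delta\}$ the map $\tilde{\mathbb L}$ and its gradient are uniformly continuous, so the descent inequality, Taylor remainder, and the sufficient-decrease argument used in Lemma~\ref{L1} all survive with constants independent of the particular outer index. Promoting those constants to be uniform in the outer iterate is the one place where the proof needs genuine care rather than a mechanical transcription of the fixed-$c_0$ argument.
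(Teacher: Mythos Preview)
Your reading of the sequences differs from the paper's, and this is where the argument goes astray. In the paper, $i$ indexes the \emph{outer} iteration only: at the $i$-th bisection step one full run of the smoothing gradient algorithm is performed with parameter $c_i$, and $y_i,\mu_i$ denote the \emph{terminal} values of that inner run. The paper's proof then simply exploits two algorithmic freedoms. First, it \emph{chooses} the initial smoothing parameter at outer step $i$ to be $\mu_{i0}=1/i$; since the inner updates never increase $\mu$, the terminal value satisfies $\mu_i\le 1/i$, whence $\mu_i\to 0$. Second, by Lemma~\ref{l2} the inner gradient norm can be driven below any prescribed tolerance, so one runs enough inner iterations to ensure $\|\triangledown\tilde{\mathbb L}(y_{i+1},\mu_i,c_0)\|<1/i$. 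That is the entire argument; no uniform-in-$c$ analysis is needed.

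Your concatenation approach has a concrete gap. You assert that along the concatenated sequence $\mu_{i+1}\in\{\mu_i,\gamma_1\mu_i\}$, but this fails at every transition between outer steps: the inner routine is \emph{restarted} with a fresh $\mu_0$, so $\mu$ can jump back up. Consequently, even if your set $S$ is infinite, one cannot conclude $\mu_i\to 0$; the concatenated $\mu$-sequence may oscillate indefinitely (e.g., if each inner run starts at $\mu_0=1$ and terminates just below $\epsilon$). A related problem affects your descent argument for part~(2): across an outer transition both $c_i$ and the restart point $x_0$ change, so $\tilde{\mathbb L}$ need not be monotone along the concatenated iterates, and the energy-decrease bookkeeping from Lemma~\ref{L1} does not automatically carry over. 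Your proposed safeguard (uniform continuity on a compact $c$-neighborhood) controls the \emph{variation} of $\tilde{\mathbb L}$ but does not restore the monotonicity or the $\mu$-monotonicity that the argument relies on. The paper avoids all of this by treating each inner run as a black box delivering a prescribed accuracy at outer step $i$.
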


\begin{proof} We use Lemmma~\ref{l2} to prove this theorem.
 \begin{enumerate}
  \item Let at the $i$-th step of the root finding iterative method we choose 
$\mu_{i0}=\frac{1}{i}$ 
as initial guess for $\mu_0$ to start the smoothing gradient algorithm. At the end of the iteration 
let we obtain $\mu_i$ as the final value of $\mu$. Then $\mu_i\leq\mu_{i0} = \frac{1}{i}$. Hence 
$\displaystyle\lim_{i\rightarrow\infty}\mu_i=0$ follows since 
$\displaystyle\lim_{i\rightarrow\infty}\mu_{i0}=0$.
  \item From condition~$(2)$ of Lemma~\ref{l2} we get, at the $i$-th step of the root finding 
method $||\triangledown\tilde{\mathbb L}$ $(y_{i+1},\mu_i,c_0)||$ can be made less than 
$\frac{1}{i}$ by increasing the number of iteration in the smoothing gradient algorithm. Hence the 
proof follows similarly as $(1)$.
 \end{enumerate}
\end{proof}

Using Theorem~\ref{T100} we immediately get the following theorem. 
\begin{theorem}
As $\{c_i\}$ converges to a root $c^*_0$ of the equation $\psi(c_0)=1$ the iterative root finding 
method converges to a clarkr stationary point of the non-smooth function $\mathbb L(x,c_0)$. 
\end{theorem}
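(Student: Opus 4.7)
The plan is to reduce the statement to Theorem~\ref{T100} by a limit-of-subsequence argument, using the compactness of the search region $K$ and the joint continuity of $\tilde{\mathbb L}$ and its gradient in the triple $(x,\mu,c_0)$. First I would fix the iterates produced by the outer root-finding method: at stage $i$, the inner smoothing-gradient algorithm is run at parameter $c_i$ and returns an iterate $y_{i+1}\in K$, together with a smoothing parameter $\mu_i$. The previous theorem already supplies the two critical limits $\mu_i\downarrow 0$ and $\|\nabla\tilde{\mathbb L}(y_{i+1},\mu_i,c_i)\|\to 0$, while the hypothesis of the statement supplies $c_i\to c_0^*$.

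Next I would invoke compactness. Since every $y_i$ lies in the $dn$-box $K$ constructed earlier, the Bolzano--Weierstrass theorem gives a subsequence $\{y_{i_k}\}$ converging to some $y^*\in K$. Along this subsequence we simultaneously have $(y_{i_k},\mu_{i_k},c_{i_k})\to (y^*,0,c_0^*)$. Because each $f_k$ is a polynomial, the smoothed Lagrangian $\tilde{\mathbb L}(x,\mu,c_0)$ and its $x$-gradient are jointly continuous in $(x,\mu,c_0)$ on $K\times(0,\infty)\times(0,\infty)$, and Theorem~\ref{T1} gives the uniform sandwich $\mathbb L(x,c_0)\le\tilde{\mathbb L}(x,\mu,c_0)\le\mathbb L(x,c_0)+\mu\log r$, so $\tilde{\mathbb L}(\cdot,\mu_{i_k},c_{i_k})\to\mathbb L(\cdot,c_0^*)$ uniformly on $K$.

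I would then translate the vanishing of $\|\nabla\tilde{\mathbb L}(y_{i_k+1},\mu_{i_k},c_{i_k})\|$ into Clarke stationarity of $\mathbb L(\cdot,c_0^*)$ at $y^*$. This is the content of Theorem~\ref{T100} applied in a slightly generalized form: the argument given there produces, from any sequence with smoothing parameters tending to zero and smoothed gradients tending to zero, a point in the Clarke subdifferential of the limit non-smooth function. Concretely, one writes each $\nabla\tilde{\mathbb L}(y_{i_k+1},\mu_{i_k},c_{i_k})$ as a convex combination of the gradients $\nabla(f_k/c_{k})$ with weights $\exp\bigl(\tfrac{1}{\mu}\cdot f_k/c_k\bigr)$ normalized by their sum; passing to a further subsequence so these convex weights converge, the limit lies in the convex hull of $\{\nabla(f_k/c_k)(y^*):k\text{ active at }y^*\}$, which is exactly $\partial_C\mathbb L(\cdot,c_0^*)(y^*)$, and equals zero by the assumed gradient limit. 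Hence $y^*$ is a Clarke stationary point of $\mathbb L(\cdot,c_0^*)$.

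The main obstacle, and the only non-routine step, is controlling this joint limit in the three variables $(x,\mu,c_0)$ rather than the fixed-$c_0$ setting of Theorem~\ref{T100}; in particular one must argue that the index set $\{k:f_k(y_{i_k+1})/c_{k}\text{ is near the max}\}$ eventually stabilizes to the active set at $(y^*,c_0^*)$, so that the softmax weights used in Theorem~\ref{T100} still concentrate on active constraints in the limit. Once that is verified, the Clarke-stationarity conclusion transfers from the inner algorithm to the outer iterates with no extra work, and the theorem follows.
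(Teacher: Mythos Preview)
Your proposal is correct and in fact considerably more thorough than the paper's own treatment. The paper gives no argument beyond the single sentence ``Using Theorem~\ref{T100} we immediately get the following theorem'': it relies on the preceding result (that $\mu_i\to 0$ and $\|\nabla\tilde{\mathbb L}(y_{i+1},\mu_i,c_0)\|\to 0$ along the outer iteration) and then invokes Theorem~\ref{T100} wholesale, with no compactness or subsequence extraction, no discussion of the softmax weights, and no mention of the joint limit in $(x,\mu,c_0)$. Your outline follows the same high-level route---reduce to the Clarke-stationarity criterion of Theorem~\ref{T100} via the limits supplied by the previous theorem---but actually supplies the analytic details the paper omits; in particular, the issue you single out as the ``main obstacle'' (that $c_i$ varies along the outer iteration, so one is not literally in the fixed-$c_0$ setting of Theorem~\ref{T100}) is simply not addressed in the paper.
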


\subsection{Some Instances showing the performance of the proposed method}

In our work we consider different networks with noisy distances having noise up to $15\%$ over the 
exact distances between adjacent pair of nodes. We choose a $10 \times 10$ square region as the 
field of interest (we don't specify any unit for this distances since units can be chosen as 
required in the relevant field). We randomly deploy the networks having within this region and 
select a random set of vertex pairs as the edge set. The exact distances between adjacent pairs of 
nodes are measured and recorded. The maximum distance possible between such pairs of nodes is called 
diameter $(=10\sqrt 2)$ of the square region. To maintain a $15\%$ noise in the distance 
measurement, we choose a random number from $[-10\sqrt 2\times 0.15, 10\sqrt 2\times 0.15] \approx 
[-2,2]$ corresponding to each edge and add it to its exact distance as error. Under this setting, 
the estimated positions of nodes for different networks obtained from our algorithm is compared with 
their original positions.

\begin{example}
We consider a network consisting of $10$ nodes and $28$ communication links with weights of the 
links as the noisy distances. The original and estimated positions of node in the network obtained 
from our algorithm are recorded in Table~\ref{tab1}. 

\begin{table}[H]
\centering
\begin{tabular}{|c|c|c|}
\hline
Vertices & $(x, y)$(original) & $(x, y)$(estimated) \\
\hline
$1$ & $(0, 0)$    &    $(0,0)$                      \\
\hline
$2$ & $(4.0122, -0.0000 )$ &  $(4.1359, 0.0000)$    \\
\hline
$3$ & $(3.8810, -2.4025)$ &  $(4.0522, -2.3175)$    \\
\hline
$4$ & $(6.1459, -1.8400)$ &  $(6.3297, -1.6593)$    \\
\hline
$5$ & $(7.9481, -0.3167)$ &  $(7.9421, 0.1764)$     \\
\hline
$6$ & $(2.1969, -0.5585)$ &  $(2.3947, -0.6566)$    \\
\hline
$7$ & $(6.1260, 5.7528)$ &  $(0.0901, 2.5287)$      \\
\hline
$8$ & $(6.8309, 6.0573)$ &  $(6.6231, 6.2059)$      \\
\hline
$9$ & $(3.9878, 4.2560)$ &  $(3.7412, 4.3045)$      \\
\hline
$10$ & $(4.2515, 2.2306)$ &  $(4.0891, 2.3111)$     \\
\hline
\end{tabular}
\vspace{2mm}
\caption{The original and estimated node positions of an arbitrary network}
\label{tab1}
\end{table}

The Figure~\ref{tab2}(a) and Figure~\ref{tab2}(b) represents two different realizations of the 
network among which Figure~\ref{tab2}(a) corresponds to the node positions given in 
Table~\ref{tab1}. The network is not uniquely localizable since the node $7$ has degree $2$. As 
shown in Figure~\ref{tab2} the node $7$ may be assigned at least two different positions in any 
particular assignment of positions for the other nodes of the network in the plane. The 
Figure~\ref{tab3} represents the estimated node positions for the network using root finding method. 
The estimated node positions of the network (Figure~\ref{tab3}) are within small neighborhood of the 
realization of the network given in Figure~\ref{tab2}(b). 
\begin{figure}[ht]
\centering
 \includegraphics[width = 2.65in]{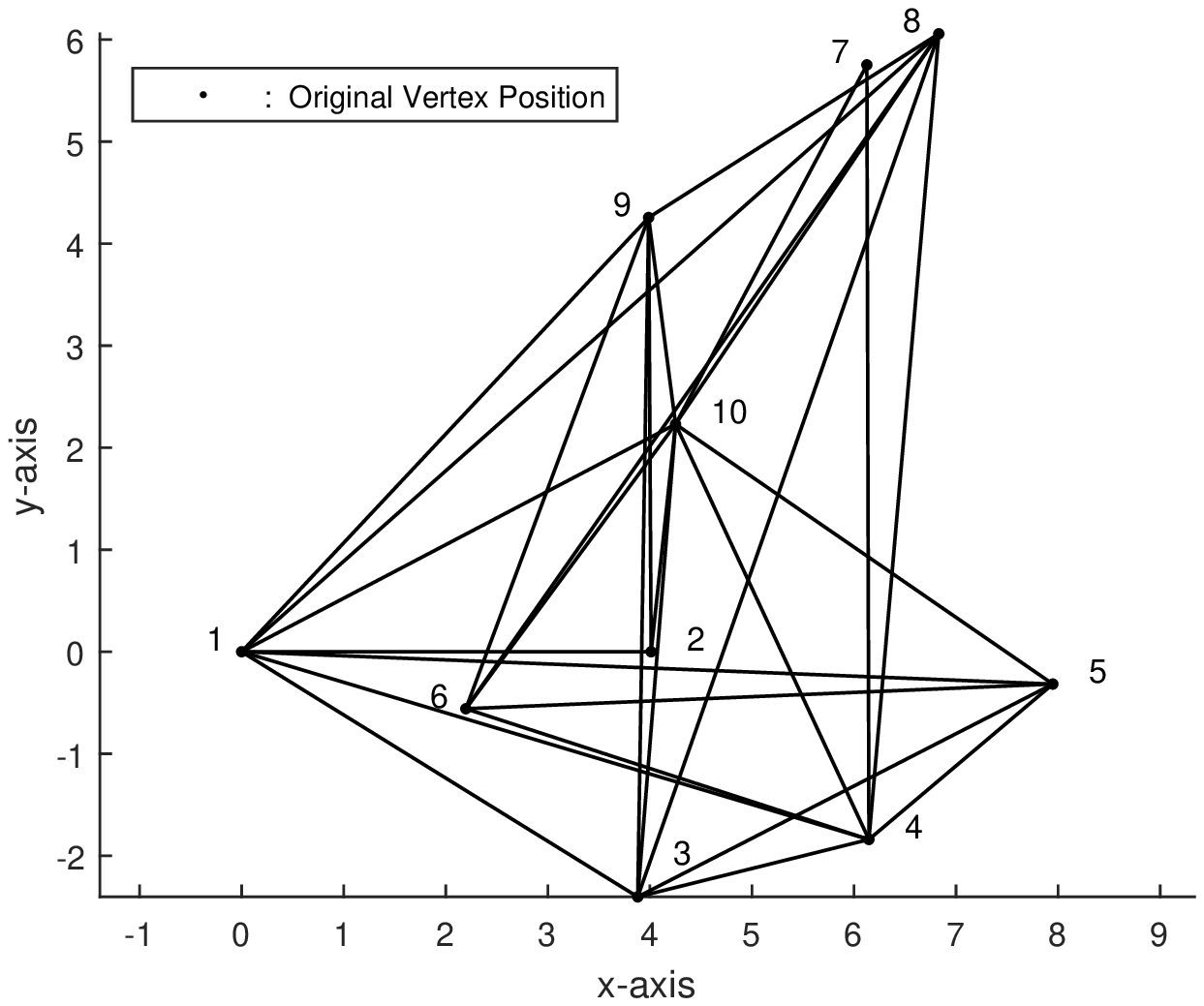}\hspace{.1mm}\includegraphics[width=2.65in]{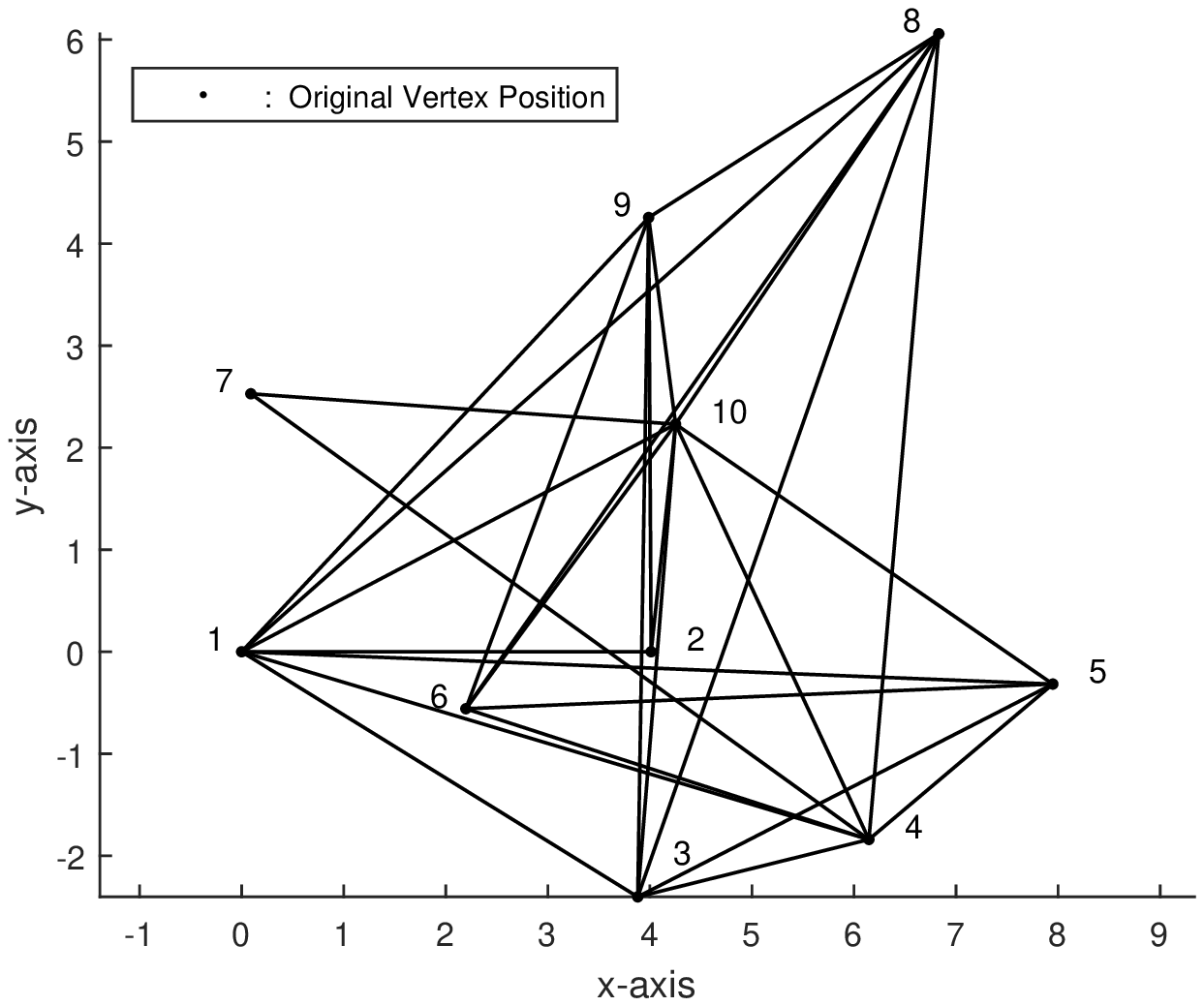}
 \caption{(a)\hspace{6.1cm}(b)}
 \label{tab2}
\end{figure}
\begin{figure}[ht]
  \centering
  \includegraphics[width = 2.65in]{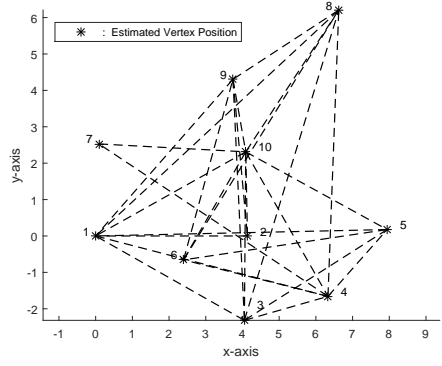}
  \caption{The estimated node positions in the network in Figure~\ref{tab2}}
\label{tab3}
\end{figure}
\end{example}

If a network is not uniquely localizable the associated localization problem has multiple solutions 
satisfying the distance constraints. The above measurement leads an intuition that the proposed 
method gives good estimation for node positions. 

\begin{example}
In Table~\ref{tab10} we are showing the original and estimated node positions for an uniquely 
localizable network. Figure~\ref{fig:15_32un} represents the associated networks along with their 
communication links.
\begin{table}[ht]
\centering
\begin{tabular}{|c|c|c|}
\hline
Vertices & $(x, y)$(original) & $(x, y)$(estimated) \\
\hline
$1$     & $(0,0)$      &   $(0,0)$      \\ 
\hline
$2$     & $(3.6881,0.0000)$ &   $(3.7735,0.0000)$ \\
\hline
$3$     & $(1.8213,0.2799)$ &   $(1.7589,0.3499)$ \\
\hline
$4$     & $(3.6192,2.4434)$ &   $(3.4261,2.5843)$ \\
\hline
$5$     & $(2.5173,0.8943)$ &   $(2.2052,0.1595)$ \\
\hline
$6$     & $(3.4462,2.4396)$ &   $(3.3709,2.2453)$ \\
\hline
$7$     & $(5.6695,4.4681)$ &   $(5.5768,4.5106)$ \\
\hline
$8$     & $(0.6039,5.6479)$ &   $(0.6087,5.6920)$ \\
\hline
$9$     & $(2.3696,3.3278)$ &   $(2.3902,3.3853)$ \\
\hline
$10$    & $(3.4283,0.1664)$ &   $(3.7578,0.0090)$ \\
\hline
$11$    & $(3.9853,4.5081)$ &   $(4.1825,4.3951)$ \\
\hline
$12$    & $(3.8426,1.3047)$ &   $(3.8101,1.2018)$ \\
\hline
$13$    & $(1.4380,5.3479)$ &   $(1.1181,5.7930)$ \\
\hline
$14$    & $(2.3117,1.9807)$ &   $(2.2834,2.0298)$ \\
\hline
$15$    & $(4.5512,2.6738)$ &   $(4.5605,2.7180)$  \\
\hline
\end{tabular}
\vspace{1mm}
\caption{The original and estimated node positions of an uniquely localizable network}
\label{tab10}
\end{table}
\begin{figure}[ht]
 \centering 
 \includegraphics[width=2.7in]{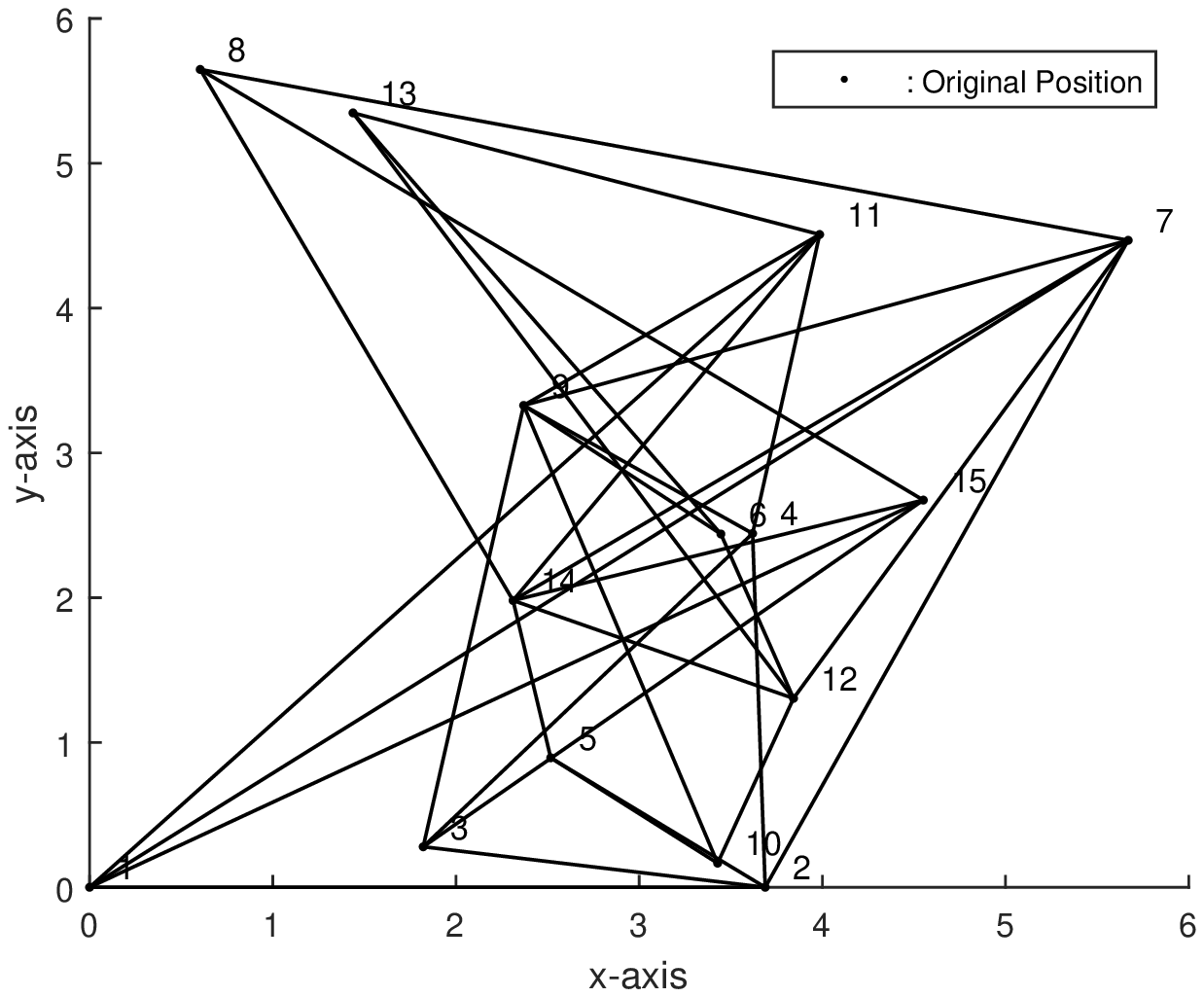}\includegraphics[width=2.7in]{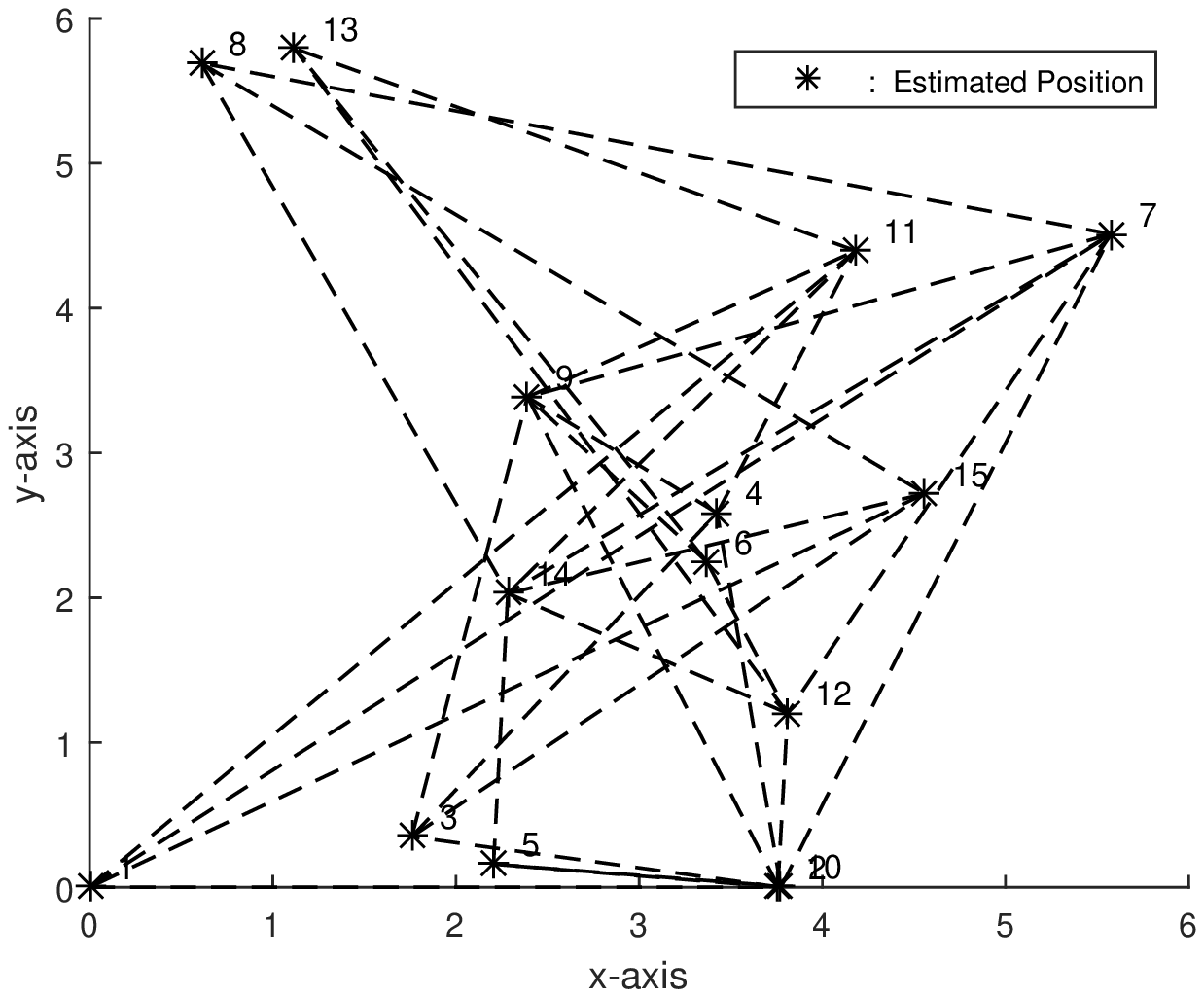}
 \caption{The original and estimated node positions of an uniquely localizable network}
 \label{fig:15_32un}
\end{figure}

We have seen that the estimated node positions are very close to the unique solution of the 
localization problem.
\end{example}

\textbf{Observations~1.} Either a network is uniquely localizable or not the root finding method 
gives a good estimation for the node positions. In case the localization problem has multiple 
solutions, the method gives an estimation which is very close to one solution in the set of all
solutions of the problem. 

If we increase the number of links in a random network keeping the number of nodes fixed then the 
probability that the network become uniquely localizable increases. The accuracy in position 
estimation may be better for networks having more edges. In the following we give some instances 
which show that as the number of edges increases in networks the position estimations become more 
accurate.

\subsection{Performance of the proposed method for increasing number of edges in a network}
Let $\aleph$ be a network having $n$ nodes and $e$ communication links. We define the 
\textit{network-density} $\rho(\aleph)$ as the ratio of $e$ to the maximum possible number of edges 
$n(n-1)/2$ (i.e., $\rho(\aleph)=2*e/n(n-1)$). Any uniquely localizable network $\aleph$ with $n$ 
number of nodes lying in the plane has at least $2n-2$ edges~\cite{JJ05} (e.g., a \textit{Laman} 
graph with an additional edge~\cite{L70}, a \textit{cycle bridge}~\cite{SM13sp}). Thus if $\aleph$ 
is uniquely localizable $\rho(\aleph)\geq 2(2n-2)/n(n-1)=4/n$. It may be noted that 
$\rho(\aleph)\geq 4/n$ does not imply the unique localizability of a network. In the following 
examples we have shown that the accuracy in position estimation increases with increasing network 
densities for randomly deployed networks as well the uniquely localizable networks.

\paragraph{\textbf{Some arbitrarily networks with different densities}}
In Figure~\ref{fig:15_32n}, Figure~\ref{fig:15_45n} and Figure~\ref{fig:15_55n} we plot the 
original 
and estimated node positions for randomly deployed networks for three different network-densities 
$0.30, 0.43$ and $0.52$. A small circle denotes the original position of a node and a star-mark 
denotes the computed position of a node. The error offset between the exact and estimated positions 
for individual nodes are indicated by lines. In these figures as the network-density increases the 
estimations for node locations become more accurate. It may be noted that large fluctuations occur 
due to flip ambiguities. Such fluctuations are also removed along with increasing densities.
\begin{figure}[ht]
\begin{minipage}{.47\textwidth}
 \centering
 \includegraphics[width = 2.58in]{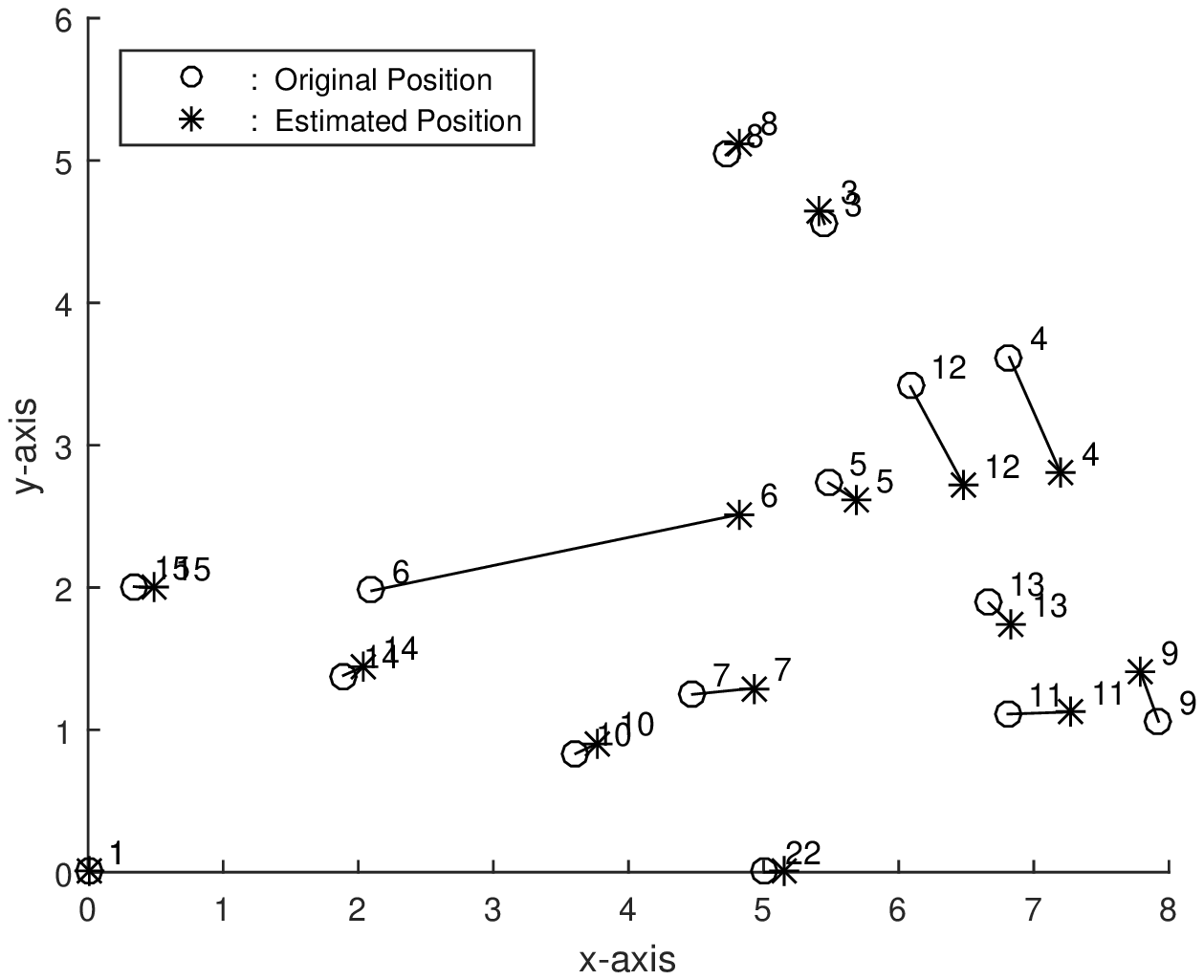}
 \caption{Random Network with density = 0.30}
 \label{fig:15_32n}
\end{minipage}
\begin{minipage}{.47\textwidth}
 \centering
 \includegraphics[width = 2.58in]{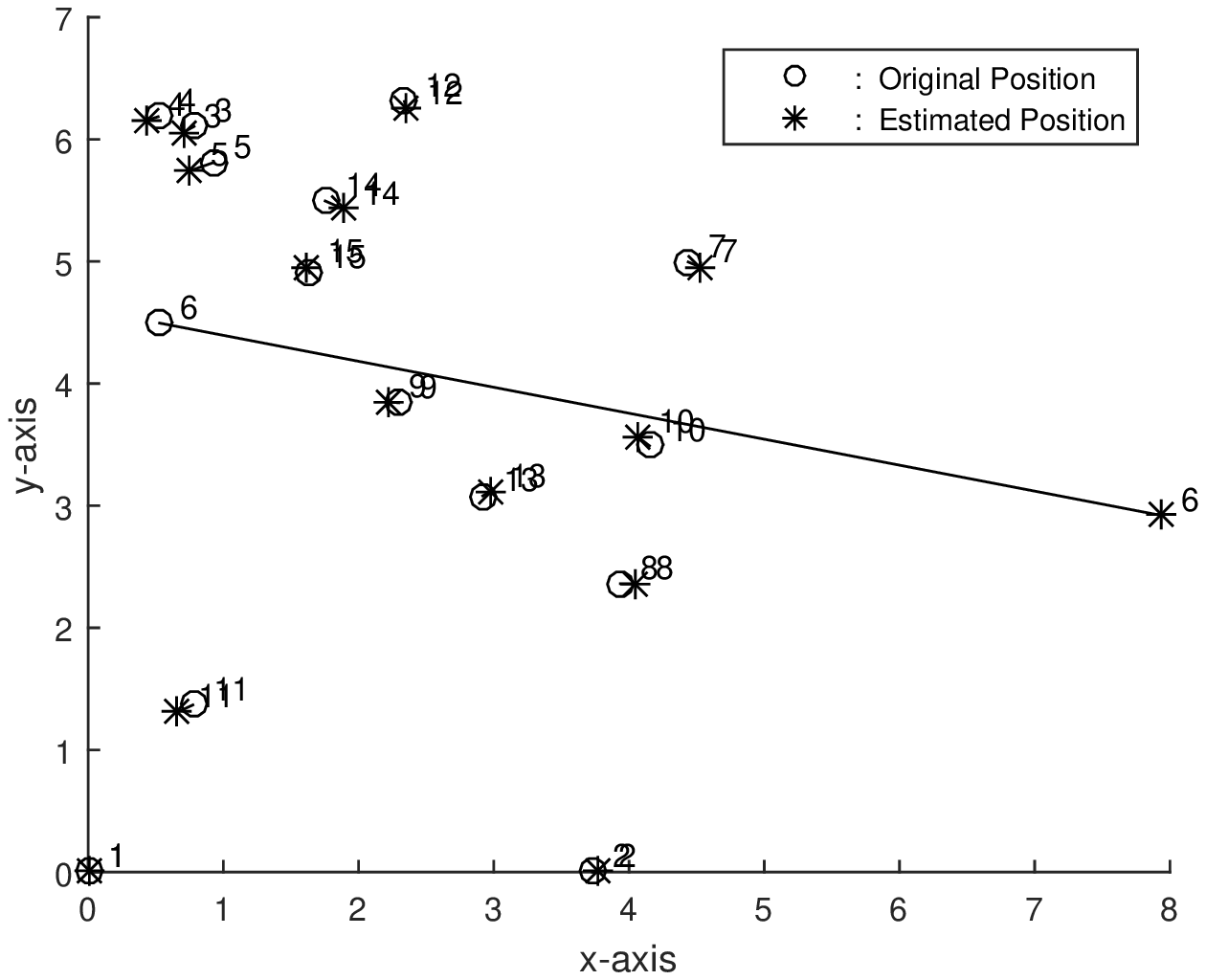}
 \caption{Random Network with density = 0.43} 
 \label{fig:15_45n}
\end{minipage}
\end{figure}
\begin{figure}[ht]
  \centering
  \includegraphics[width = 2.65in]{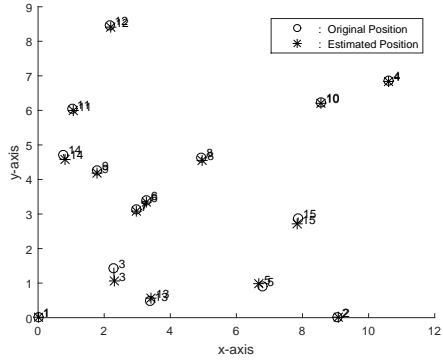}
  \caption{Random Network with density = 0.52}
  \label{fig:15_55n}
\end{figure}
\paragraph{\textbf{Some uniquely localizable networks with different network densities}}
Figure~\ref{un} and Figure~\ref{fig:15_55u} show scenarios of uniquely localizable networks with 
three 
network-densities $0.30,~0.43$ and $0.52$. For each network-density, the estimated node positions 
for the underlying network are very close to their respective original positions. 
\begin{figure}[ht]
 \begin{minipage}{.47\textwidth}
 \centering
  \includegraphics[width = 2.55in]{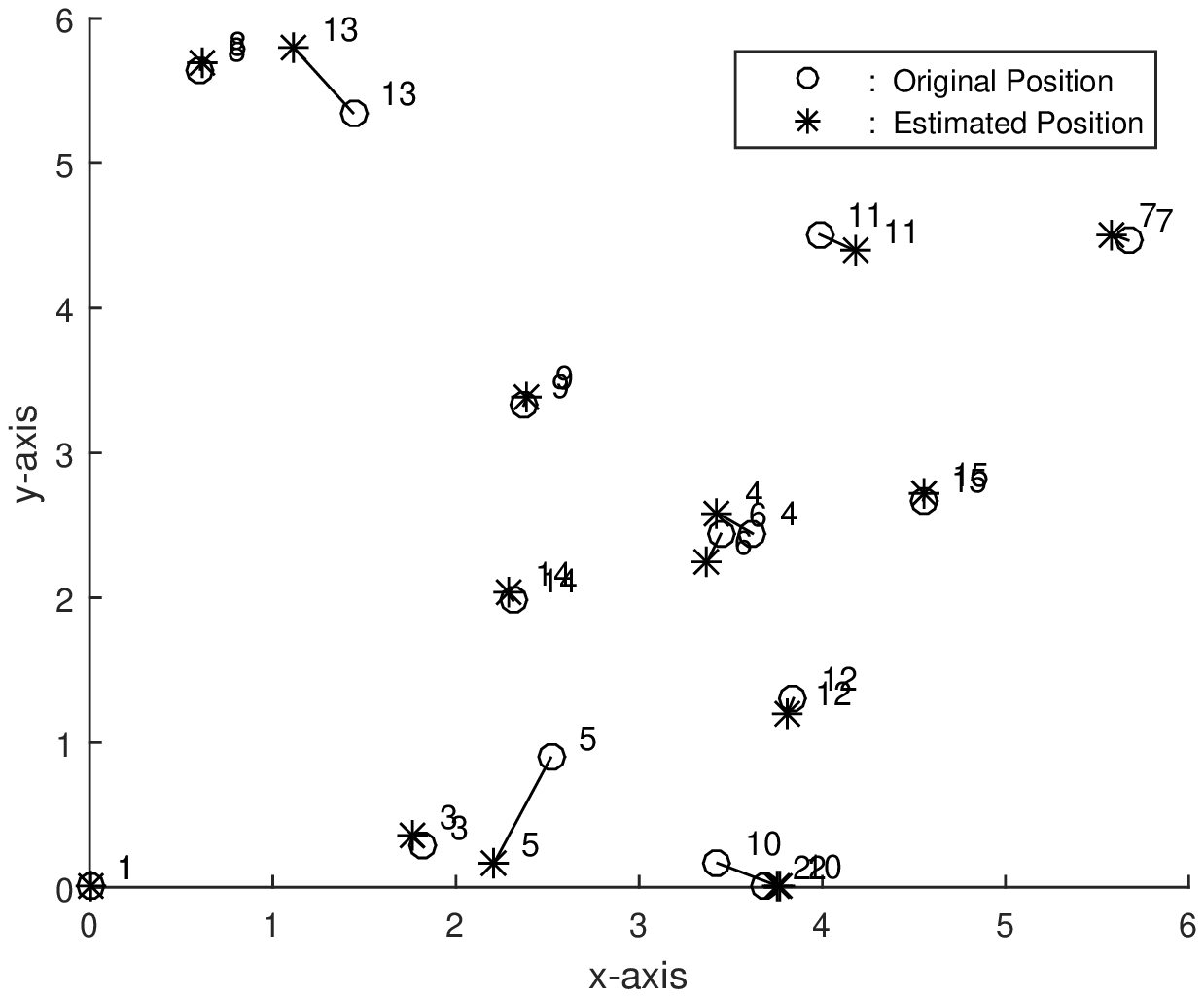}
 \end{minipage}
 \begin{minipage}{.47\textwidth}
  \centering
  \includegraphics[width = 2.55in]{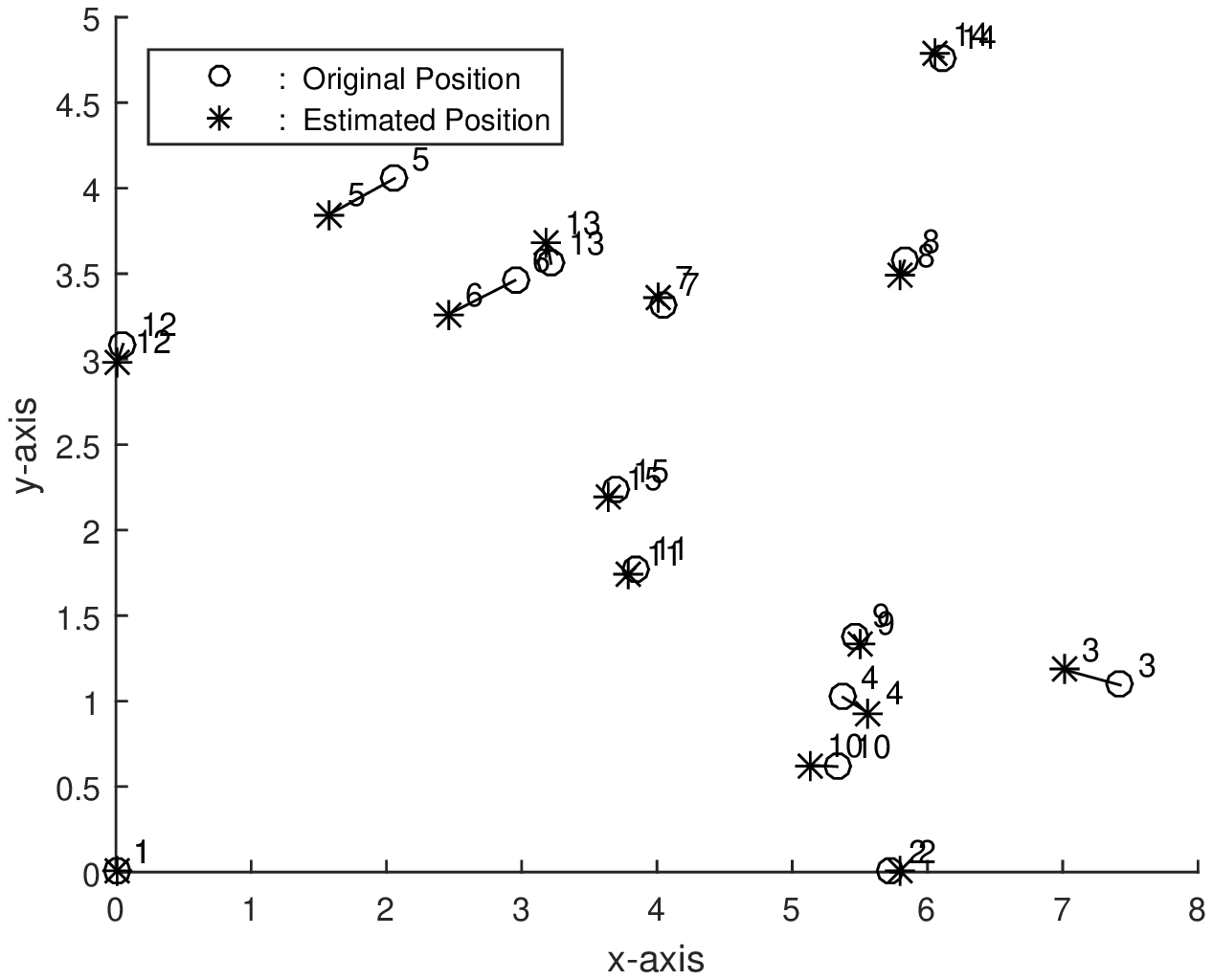}
 \end{minipage}
 \caption{Original and estimated node positions for uniquely localizable network with network 
densities 0.30 and 0.43 from left to right}
\label{un}
\end{figure}
\begin{figure}[tbhp]
 \centering
 \includegraphics[width =2.55in]{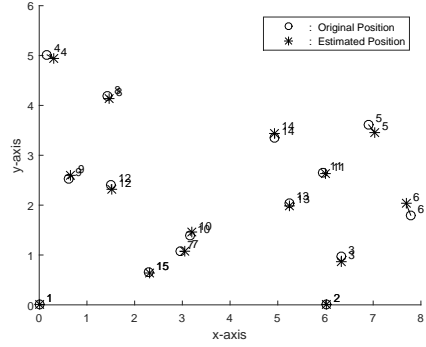}
 \caption{Original and estimated node positions for uniquely localizable network with network 
 density = 0.52}
 \label{fig:15_55u}
\end{figure}
Again this difference between the uniquely localizable networks and randomly chosen networks occurs 
due to flip of vertices in randomly chosen networks. However from Figure~\ref{un} and 
Figure~\ref{fig:15_55u} it is clear that the proposed algorithm works well. 
\vspace{2mm}

\textbf{Observation2.} Either the network is uniquely localizable or not the amount of errors are 
decreasing with increasing network densities.
\vspace{2mm}

Using an extensive number of networks, we shortly see that our claim is satisfied for a large 
number of networks.

\section{Error analysis of the root finding method}
\label{S6}
We have seen in Section~\ref{S5} that for a root $c_0=c^*_0$ of the function $\psi(c_0)-1$, the 
node 
positions for the underlying network is given by $x^*$ corresponding to which $\psi(c^*_0)=\mathbb 
L(x^*,c^*_0)=1.$ To analyze the performance of the location estimation different researchers have 
considered different error metrics~\cite{PKY8,DPG1}. In this work, we consider two different error 
metrics which are simple and standard. The performance of the root finding method is characterized 
with respect to these metrics.

\subsection{Error metrics}
\begin{enumerate}
 \item \textit{Mean Error}: The average of the differences between the estimated node positions (say
$x'_i$) and the original node position (say $x_i$) for all nodes is the \textit{mean error} of the 
position estimation, i.e., if a network has $n$ nodes then
$$Mean~Error=\frac{1}{n}\sum_i||x_i-x'_i||.$$
The mean error gives an overall idea about the accuracy of position estimations of the nodes in a
network. If for a network the mean error is small enough then it can be said that the estimated node
positions are very close to their original positions.

 \item \textit{Maximum Error}: This is the maximum difference among all the differences between the
estimated node locations and the general node locations of a network, i.e.,
$$Max~Error=\max_i||x_i-x'_i||.$$
The worst fluctuation in node position estimation is measured by the maximum error in any case.
\end{enumerate}

We consider a large number of random networks for finding positions of nodes using the root finding 
method. For each network we compute the mean and maximum errors in position estimation not removing 
the flip ambiguities. The error analysis shows that for this large class of networks, both the 
average mean and average maximum errors are decreasing with the growing network densities and 
gradually diminish to zero. 

\subsection{Setting the Environment for executing our algorithm}
We select a rectangular region ($10~unit\times 10~unit$) in the plane as the field of interest. 
Networks are randomly deployed within this region. We construct the graphs underlying to each 
network where the vertices and edges of the graph are considered as the nodes and communication 
links in the network respectively. The upper and lower bounds of the exact distance for each 
communication link are decided with a random distance error chosen within $0\%$ to $15\%$. The 
edge-weighted graph $G=(V,E,d)$ is constructed by setting the bounds as edge weights of the graph. 
The reconstruction of the network (i.e., assigning positions to the nodes) is carried out as 
follows:

\begin{itemize}
 \item A region $K$ in $\mathbb R^{2n}$ is considered (as described in Section~\ref{S4}). As 
discussed earlier all the feasible solutions (up to congruence) of the network localization problem 
have to be included in $K$.
 \item In $\mathbb R^{+}$ (set of all positive real numbers) an interval is identified 
(Section~\ref{S4}) within which the function $\psi(c_0)-1$ must have a root.
 \item A point $x_0$ is randomly chosen from $K$ as the initial input for starting the 
\Call{SmoothingGradientAlgorithm}{}.
 \item The estimated node positions of each network is recorded and compared to the original node
positions to observe the accuracy of estimation.
\end{itemize}

\subsection{Performance of our algorithm}
We choose some network densities such that the underlying networks may be sparse or may be dense for 
these densities. For each network density, more than $500$ networks are considered and positions of 
nodes are estimated using the proposed technique. From the original and estimated node positions of 
each network both the \textit{mean errors} and \textit{maximum errors} are computed. The average of 
all the mean errors and the average maximum errors for the selected network densities are computed.

We give two different scenarios for both the average mean error and the average maximum error on the 
basis of the amount of noise we allow in the input distance information. In the table~\ref{t100} the 
average errors are recorded for random networks where we allow maximum $10\%$ noise in the distance 
information. The computed average errors are small in this case. The Figure~\ref{fig:ErrGrp1} 
associated to the table shows that, as the network density increases the average mean error as well 
the average maximum error decrease. We observe for higher network densities that the average mean 
errors are insignificant. Thus for randomly chosen networks having $10\%$ noise in the distance 
information our proposed method gives a satisfactory result. We reach to a similar conclusion for 
the networks (Table~\ref{t101} and the corresponding plot in Figure~\ref{fig:ErrGrp2}) where the 
average errors are computed for networks having $15\%$ noise in distance information.

\begin{figure}[ht]
\begin{minipage}{.45\textwidth}
\centering
\begin{tabular}{|c|c|c|}
\hline
$\rho(\aleph)$ & \textit{Mean error} & \textit{Maximum error}  \\
\hline
~~~~~$0.40$ & $1.4863$ &  $4.3133$  \\
\hline
~~~~~$0.44$ & $0.7368$ &  $2.3716$  \\
\hline
~~~~~$0.51$ & $0.3380$ &  $0.9376$  \\
\hline
~~~~~$0.58$ & $0.2856$ &  $0.8061$  \\
\hline
~~~~~$0.64$ & $0.1570$ &  $0.3563$  \\
\hline
~~~~~$0.71$ & $0.1301$ &  $0.3196$  \\
\hline
~~~~~$0.78$ & $0.0919$ &  $0.2008$  \\
\hline
~~~~~$0.84$ & $0.0820$ &  $0.1753$  \\
\hline
~~~~~$0.91$ & $0.0781$ &  $0.1730$ \\
\hline
\end{tabular}
\vspace{1mm}
\caption{Errors computed for networks having maximum $10\%$ noise in distance constraints}
\label{t100}
\end{minipage}
\begin{minipage}{.5\textwidth}
 \centering
 \includegraphics[width=2.75in]{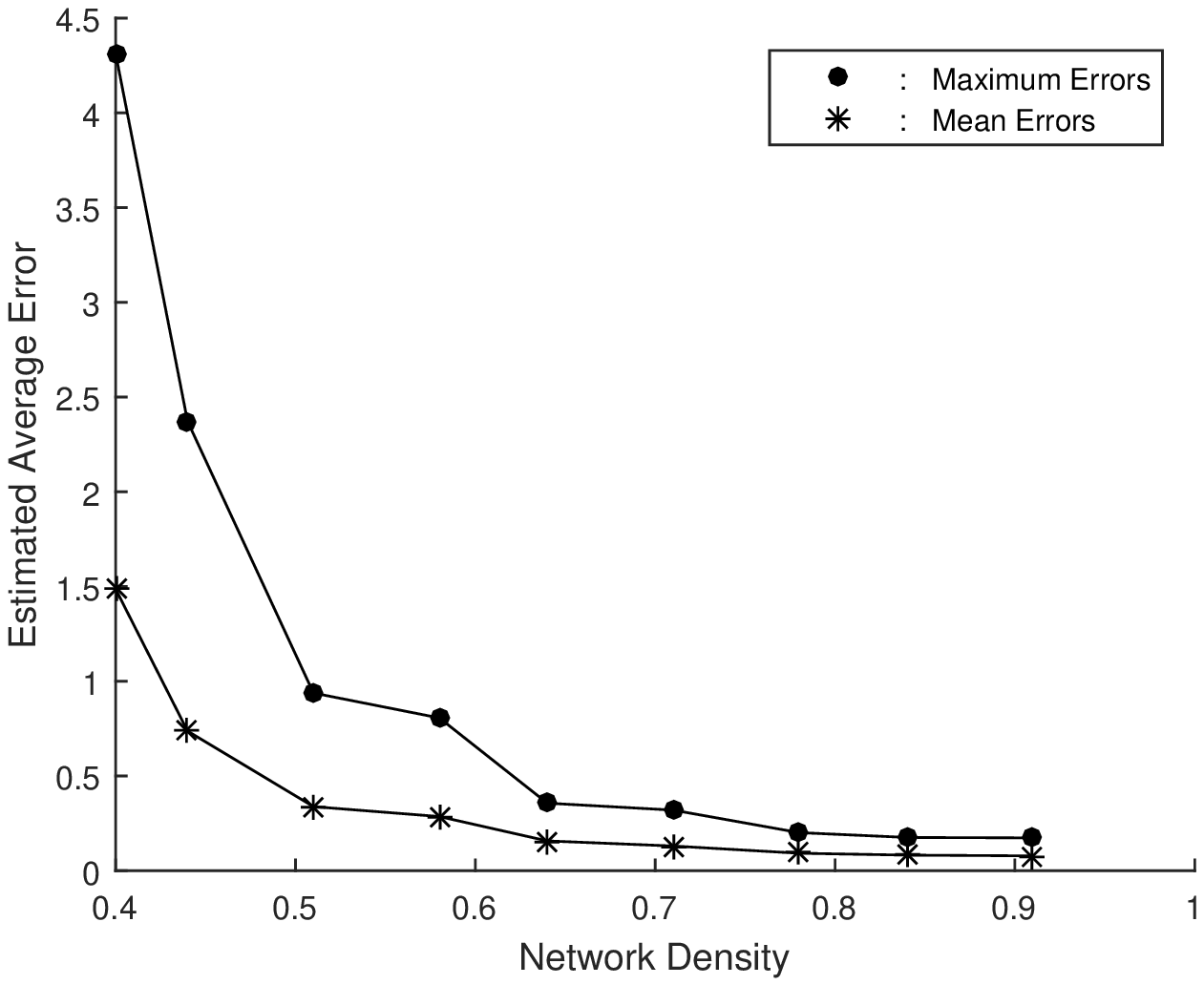}
 \caption{Polygon representing the values given in Table~\ref{t100}}
\label{fig:ErrGrp1}
\end{minipage}
\end{figure}

\begin{figure}[ht]
\begin{minipage}{.45\textwidth}
\centering
 \begin{tabular}{|c|c|c|}
    \hline
    $\rho(\aleph)$ & \textit{Mean error} & \textit{Maximum error}  \\
    \hline
    ~~$0.47$ & $0.8341$ &  $2.7959$ \\
    \hline
    ~~$0.51$ & $0.5798$ & $1.5613$  \\
    \hline
    ~~$0.56$ & $0.3578$ & $1.1638$  \\
    \hline
    ~~$0.60$ & $0.2672$ & $0.7396$  \\
    \hline
    ~~$0.64$ & $0.2659$ & $0.7014$  \\
    \hline
    ~~$0.69$ & $0.1810$ & $0.4455 $ \\
    \hline
    ~~$0.73$ & $0.1668$ & $0.3875$  \\
    \hline
    ~~$0.78$ & $0.1312$ & $0.3182$  \\
    \hline
    ~~$0.82$ & $0.1145$ & $0.2588$  \\
    \hline
 \end{tabular}
\vspace{1mm}
\caption{Errors computed for networks having maximum $15\%$ noise in distance constraints}
\label{t101}
\end{minipage}
\begin{minipage}{.5\textwidth}
  \centering
  \includegraphics[width=2.75in]{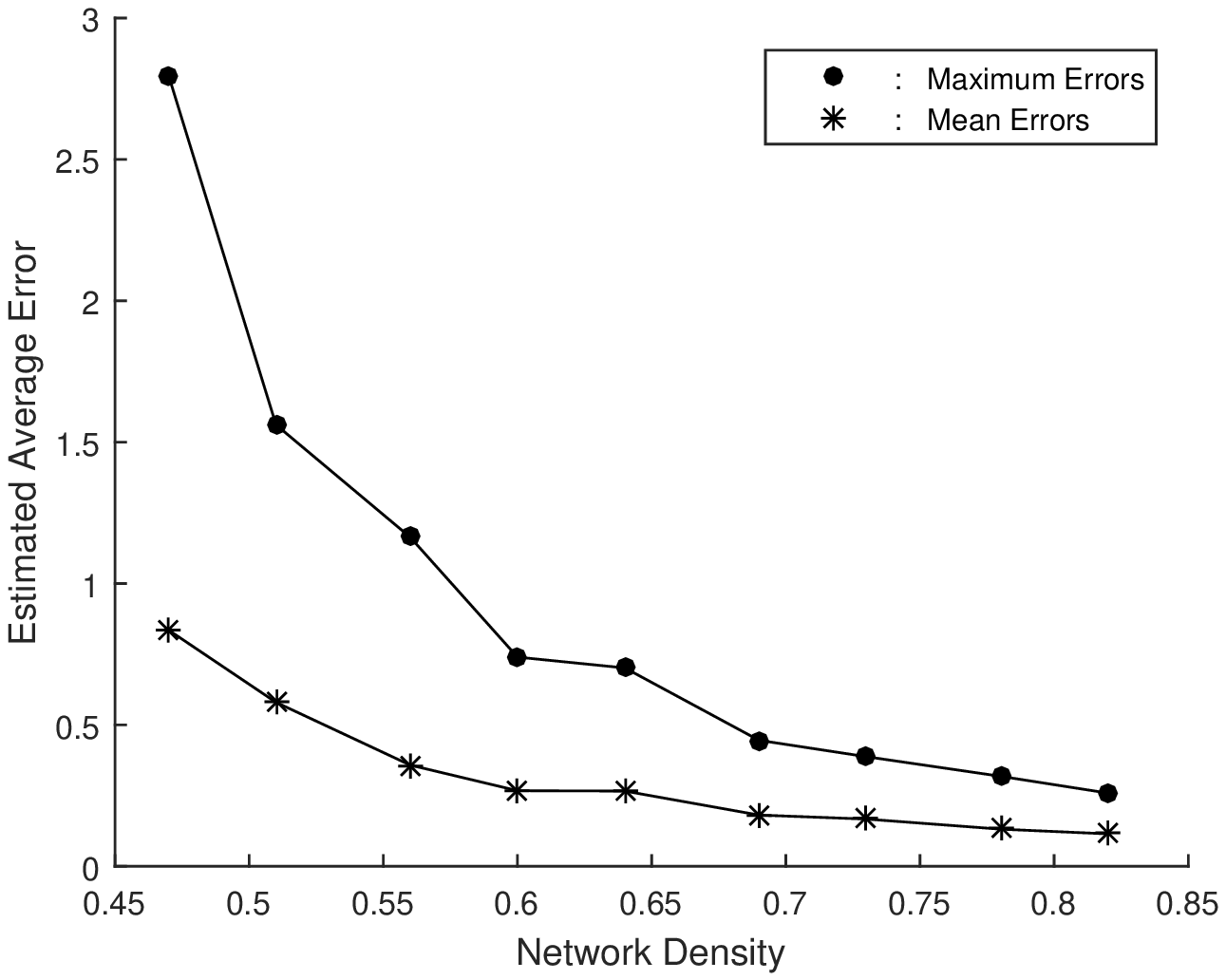}
  \caption{Polygon representing the values in Table~\ref{t101}}
  \label{fig:ErrGrp2}
\end{minipage}
\end{figure}

\subsection{The advantages of the proposed root finding method}
In the literature, there are several network localization techniques which claim to solve the 
localization problem. We have mentioned some of the popular localization techniques in 
Section~\ref{S1} which instead of solving the general network localization problem, have solved 
some modified version of it. Such modifications of the general network localization problem were 
done for implementing the existing convex optimization techniques like SDP, least square 
approximation etc. to solve the problem. For instances, in~\cite{DPG1} the non-convex distance 
constraints are eliminated and solved using SDP. In~\cite{PKY8}, the non-convex constrains are 
relaxed such that the modified problem becomes convex optimization problem.  In our work, we solve 
the general network localization problem (Problem~\ref{p1}) as the root finding problem 
(Problem~\ref{P6}). We enlist below some advantages of using the root finding method for solving 
the 
network localization problem.

\begin{enumerate}
 \item The root finding method uses nonlinear non-convex Lagrangian optimization technique to solve 
the general network localization problem which considers both the convex and non-convex distance 
constraints without any modification or relaxation.
 \item The actual feasible region of the network localization problem was modified in~\cite{PKY8} 
for implementing the SDP. Therefore the errors in position estimation was due to incorrect 
constraints. On the contrary, in the proposed method the constraints are used without any 
modification. Therefore the errors in position estimation occur due to the inherent limitations of 
numerical root finding technique. By increasing the number of iteration of the root computations, 
one can achieve a desired level of accuracy in the position estimation.
 \item The localization problems considered in ~\cite{PKY8} contains $0\%$ to $10\%$ noise over 
the actual distance measurements. The localization technique in~\cite{PKY8} requires further 
improvement if the noise raises to $10\%$ to $20\%$ in distances. In reality, the distances 
constraints in localization problem mostly involves larger errors than the $10\%$. In a test bed 
with $100$ nodes we experience at most $50\%$ noise in distance measurements. Though the examples 
exhibited in this paper contain maximum $20\%$ noise in distance measurements, the proposed 
method may be used for computing node positions if the percentage of noise increases.
 \item The convergence of root finding method is guaranteed to the unique solution of the network 
localization problem if the network is uniquely localizable. If the network is not uniquely 
localizable the method converges to one of the possible solutions of it. 
\end{enumerate}

\section{Conclusion}
\label{S7}
The growing functionality of wireless sensor networks (WSN) in different fields of real life 
applications requires the actual node positions as an information for properly monitoring over the 
detected events. The existing techniques for solving the network localization problem do not solve 
the network localization problem. Instead, some variants of the problem have been solved so far. To 
the best of our knowledge, this is the first approach for solving the general network localization 
problem in noisy environment. We take the advantage of the nonlinear Lagrangian function for 
non-convex constraints to transform the general network localization problem to a root finding 
problem and use a simple iterative method for finding the roots. The method is guaranteed to 
converge to a solution. The node positions of any network can be computed up to a desired level of 
accuracy in this method. The examples show that for randomly deployed networks the root finding 
method gives good estimations for node positions.

We have a plan to validate the method with a test bed consisting more than $100$ Arduino nodes. 
Our future target is to improve the root finding method in distributed techniques to enhance its 
performance in WSN.
\vspace{10mm}

\begin{center}
\textbf{ACKNOWLEDGMENT}
\end{center}
The first author would like to acknowledge Council of Scientific and Industrial Research (CSIR), 
Government of India, for giving the financial support in the form of Junior research fellowship to 
carry out this work.

\bibliographystyle{plain}
\bibliography{mybibfile}

\appendix

\section{Clarkr stationary point}
\label{app2}
In non-smooth analysis \textit{F.H. Clarkr} introduces the notion of generalized derivatives, 
normal and tangent cones~\cite{CF83} to grapple the non-smoothness together with the absence of 
convexity. Let $f:B\rightarrow\mathbb R\cup \infty$ be a non-smooth locally Lipschitz function 
where 
$B$ is a Banach Space. Let $B^*$ be the dual space of $X$. 
\begin{definition}~\cite{CF83}
The \textbf{generalized directional derivative} of $f$ at a point $\tilde{x}$ and in 
the direction of the vector $v$ is defined as
$$f^0(\tilde{x},v)=\displaystyle\limsup_{y\rightarrow x,t\downarrow 0}\frac{f(y+tv)-f(y)}{t}.$$
\end{definition}
Using theorems of functional analysis~\cite{} it may be said that there is a linear functional 
$\zeta:~B\rightarrow\mathbb R$ such that $f^0(x,v)\geq \zeta(v)$ for all $v\in B$. We adopt the 
convention of using $\langle\zeta,v\rangle$ for $\zeta(v)$. 
\begin{definition}
The \textbf{generalized gradient} at a point $\tilde{x}$ of $f$ is defined as
$$\partial f(\tilde{x}):=\{\zeta\in B^*:f^0(\tilde{x},v)\geq\langle\zeta,v\rangle\}$$
for all $v\in B$.
\end{definition}
\textit{Remark:} The function $f$ is \textbf{strictly differntiable} at a point $\tilde x$ 
if $\partial f(\tilde{x})$ is a singleton set, i.e., $\partial f(\tilde{x})=\{\triangledown 
f(\tilde{x})$\}.
\begin{theorem}
 $\partial f(\tilde{x})$ can be proved to be non-empty, convex and weak*-compact subset of $B^*$. 
\end{theorem}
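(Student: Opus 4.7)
The plan is to prove each of the three properties in turn, leveraging the fact that for a locally Lipschitz $f$ with local Lipschitz constant $K$ near $\tilde x$, the generalized directional derivative $v\mapsto f^0(\tilde x,v)$ is a sublinear functional on $B$ bounded by $K\lVert v\rVert$. So the first step I would carry out is a short preliminary lemma establishing these three properties of $f^0(\tilde x,\cdot)$: (i) positive homogeneity $f^0(\tilde x,\lambda v)=\lambda f^0(\tilde x,v)$ for $\lambda\ge 0$, directly from the definition by rescaling $t$; (ii) subadditivity $f^0(\tilde x,v+w)\le f^0(\tilde x,v)+f^0(\tilde x,w)$, by splitting $f(y+t(v+w))-f(y)=[f(y+tv+tw)-f(y+tv)]+[f(y+tv)-f(y)]$ and taking the $\limsup$ on each piece (using that $y+tv\to\tilde x$ as $y\to\tilde x$ and $t\downarrow 0$); and (iii) the bound $|f^0(\tilde x,v)|\le K\lVert v\rVert$ from the Lipschitz estimate.

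For non-emptiness, I would fix any direction $v_0\in B$, define the linear functional $\zeta_0(\alpha v_0):=\alpha f^0(\tilde x,v_0)$ on the one-dimensional subspace $\mathbb R v_0$, and verify $\zeta_0\le f^0(\tilde x,\cdot)$ there using positive homogeneity and the inequality $-f^0(\tilde x,-v_0)\le f^0(\tilde x,v_0)$ (which follows from subadditivity applied to $0=v_0+(-v_0)$). Then the Hahn-Banach theorem applied to the sublinear majorant $f^0(\tilde x,\cdot)$ extends $\zeta_0$ to a linear functional $\zeta$ on all of $B$ with $\zeta\le f^0(\tilde x,\cdot)$. Continuity of $\zeta$, hence $\zeta\in B^*$, follows from the bound (iii), so $\zeta\in\partial f(\tilde x)$.

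Convexity is a one-line check: given $\zeta_1,\zeta_2\in\partial f(\tilde x)$ and $t\in[0,1]$,
\[
\langle t\zeta_1+(1-t)\zeta_2,v\rangle
=t\langle\zeta_1,v\rangle+(1-t)\langle\zeta_2,v\rangle
\le f^0(\tilde x,v)
\]
for every $v\in B$, so the convex combination lies in $\partial f(\tilde x)$.

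For weak*-compactness, I would first show boundedness in $B^*$: every $\zeta\in\partial f(\tilde x)$ satisfies $\langle\zeta,v\rangle\le f^0(\tilde x,v)\le K\lVert v\rVert$ and also $-\langle\zeta,v\rangle=\langle\zeta,-v\rangle\le K\lVert v\rVert$, so $\lVert\zeta\rVert_*\le K$. Hence $\partial f(\tilde x)$ is contained in the closed ball $K\cdot B^*_1$, which is weak*-compact by the Banach-Alaoglu theorem. It remains to show weak*-closedness: for each fixed $v\in B$ the set $H_v:=\{\zeta\in B^*:\langle\zeta,v\rangle\le f^0(\tilde x,v)\}$ is a weak*-closed half-space (the evaluation map $\zeta\mapsto\langle\zeta,v\rangle$ is weak*-continuous by definition), and $\partial f(\tilde x)=\bigcap_{v\in B}H_v$ is therefore an intersection of weak*-closed sets, hence weak*-closed. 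A weak*-closed subset of a weak*-compact set is weak*-compact, completing the proof.

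The main obstacle is the non-emptiness step, which is the one place where a non-trivial functional-analytic tool (Hahn-Banach) is indispensable; the remaining parts are essentially direct consequences of the definition combined with Banach-Alaoglu.
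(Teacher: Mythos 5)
Your proof is correct. Note that the paper itself offers no proof of this statement -- it is quoted in the appendix as a known result from Clarke's nonsmooth analysis -- so there is nothing to compare against; what you have written is precisely the standard argument: sublinearity and the Lipschitz bound $|f^0(\tilde x,v)|\le K\lVert v\rVert$ for $f^0(\tilde x,\cdot)$, Hahn--Banach for non-emptiness, a direct verification for convexity, and boundedness plus Banach--Alaoglu plus weak*-closedness (as an intersection of the weak*-closed sets $H_v$) for weak*-compactness. All the individual steps check out, including the key inequality $-f^0(\tilde x,-v_0)\le f^0(\tilde x,v_0)$ obtained from subadditivity at $0=v_0+(-v_0)$, which is what makes the one-dimensional functional dominated on the whole line and not just the positive ray.
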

The norm $||\zeta||_*$ in $B^*$ is defined as 
$$||\zeta||_*=\sup\Big\{\langle\zeta,v\rangle\Big|v\in B, ||v||\leq 1\Big\}.$$
\begin{definition}~\cite{MC06}
 A point $\tilde{x}$ is said to be a \textbf{clarkr stationary point} of $f$ if $0\in\partial 
f(\tilde x,v)$, i.e., $f^0(\tilde{x},v)\geq0$ for every direction vector $v$ in the clarkr tangent 
cone.
\end{definition}
\begin{definition}~\cite{CF83}
 The \textbf{clarkr tangent cone} to a point $\tilde{x}\in U\subset X$ consists of all the vectors 
$v$ of $X$ for which the following condition holds:

For every sequence $\{x_i\}$ in $U$ converging to $\tilde{x}$ and sequence $\{t_i\}$ $(t_i>0$ 
$\forall i)$ decreasing to $0$, there is a sequence $\{v_i\}$ in $X$ converging to $v$ such that 
$x_i+t_iv_i\in U$ for all $i$.
\end{definition}
The following theorems are useful for finding the clarkr stationary points of a non-smooth function.
\begin{theorem}~\cite{CF83}
 $\zeta\in\partial f(\tilde{x})$ if and only if $f^0(\tilde x, v)\geq\langle\zeta,v\rangle$ for all 
directions $v$ in the underlying space.
\end{theorem}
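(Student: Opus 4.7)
The plan is short because the statement, once the definition of $\partial f(\tilde{x})$ in the preceding paragraph is read with its universal quantifier made explicit, is essentially a restatement of that definition. My approach is therefore to record both directions as one-line verifications after fixing the interpretation, and to add a brief structural remark that links the characterization to the Hahn--Banach picture and makes the preceding theorem (nonemptiness, convexity and weak*-compactness of $\partial f(\tilde{x})$) transparent.

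First I would make the reading of the defining set-builder explicit: $\partial f(\tilde{x})$ is the set of $\zeta \in B^*$ such that, for every $v \in B$, one has $f^0(\tilde{x},v) \geq \langle \zeta, v\rangle$. This is the only sensible reading of the displayed expression in the excerpt, because a condition involving a test direction $v$ must be universally quantified in order for the resulting object to be a single subset of $B^*$ rather than a $v$-indexed family. Pinning this down also aligns the notation with the classical Clarke subdifferential.

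For the forward implication I would take $\zeta \in \partial f(\tilde{x})$ and read off the inequality $f^0(\tilde{x},v) \geq \langle \zeta, v\rangle$ for every $v$ directly from the membership condition of the defining set. For the reverse implication I would assume a fixed $\zeta \in B^*$ satisfies $f^0(\tilde{x},v) \geq \langle \zeta, v\rangle$ for every $v \in B$; then $\zeta$ meets the very condition defining $\partial f(\tilde{x})$, so it belongs to the set. Both implications are thus tautological once the quantifier convention has been pinned down, and no limit passage or Lipschitz estimate is required at this step.

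The only nontrivial comment I would include is structural. Because $f$ is assumed locally Lipschitz, the map $v \mapsto f^0(\tilde{x},v)$ is finite-valued, sublinear and positively homogeneous on $B$, so by a Hahn--Banach dominated-extension argument the collection of continuous linear functionals majorized by it is nonempty; the theorem just identifies this collection with $\partial f(\tilde{x})$. Since the pointwise inequality $f^0(\tilde{x},v) \geq \langle \zeta, v\rangle$ is preserved under convex combinations in $\zeta$ and under weak* limits, convexity and weak*-compactness follow at once, recovering the preceding theorem in the excerpt. The main ``obstacle'' is therefore not mathematical but expository: ensuring that the statement and the definition are being compared under the same quantifier reading, so that the equivalence is not mistaken for a comparison between genuinely distinct conditions.
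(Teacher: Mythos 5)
Your proposal is correct: under the paper's own definition of $\partial f(\tilde{x})$ (the set of $\zeta\in B^*$ with $f^0(\tilde{x},v)\geq\langle\zeta,v\rangle$ for all $v\in B$), the theorem is a direct restatement of the membership condition, and the paper itself offers no separate argument beyond citing Clarke, so your tautological verification matches its treatment. Your added Hahn--Banach remark correctly accounts for the nonemptiness, convexity and weak*-compactness asserted in the preceding theorem, but is not needed for the equivalence itself.
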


\begin{theorem}~\cite{CF83}
 Let $\triangledown f(\tilde x)$ exist at a point $\tilde x$ near which $f$ is Lipschitz 
continuous. Then $\triangledown f(\tilde x)\in\partial f(\tilde{x})$ 
\end{theorem}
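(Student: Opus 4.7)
The plan is to verify directly that $\triangledown f(\tilde{x})$ satisfies the defining inequality of the Clarke generalized gradient, namely $f^0(\tilde{x},v) \geq \langle \triangledown f(\tilde{x}), v\rangle$ for every direction $v$ in the underlying space. Once this single inequality is in hand for arbitrary $v$, the conclusion $\triangledown f(\tilde{x}) \in \partial f(\tilde{x})$ is immediate from the definition of $\partial f$ stated earlier in the appendix.

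To establish that inequality I would exploit the fact that the generalized directional derivative is a $\limsup$ over two independent moving parameters, $y \to \tilde{x}$ and $t \downarrow 0$. Any such $\limsup$ dominates the limit obtained by restricting to any single sub-family of admissible sequences. The natural restriction here is to freeze the first parameter at $y \equiv \tilde{x}$ along any sequence $t_n \downarrow 0$; this turns the Clarke difference quotient $\frac{f(y+tv)-f(y)}{t}$ into the ordinary forward difference quotient $\frac{f(\tilde{x}+t_n v)-f(\tilde{x})}{t_n}$ of $f$ at $\tilde{x}$ in direction $v$. Since $\triangledown f(\tilde{x})$ is assumed to exist, this restricted quotient converges (not merely $\limsup$s) to $\langle \triangledown f(\tilde{x}),v\rangle$, giving
$$f^0(\tilde{x},v) \;\geq\; \lim_{t\downarrow 0}\frac{f(\tilde{x}+tv)-f(\tilde{x})}{t} \;=\; \langle \triangledown f(\tilde{x}),v\rangle.$$
As $v$ was arbitrary, $\triangledown f(\tilde{x})$ meets the membership criterion for $\partial f(\tilde{x})$.

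The role of the local Lipschitz hypothesis is primarily housekeeping: it keeps $f^0(\tilde{x},v)$ finite (so the $\limsup$ is a real number and the inequality has substantive content) and, combined with the earlier theorem, guarantees that $\partial f(\tilde{x})$ is a nonempty, convex, weak*-compact subset of $B^*$ into which $\triangledown f(\tilde{x})$ can meaningfully belong. I do not expect any genuine obstacle. The argument is essentially a one-line observation that a $\limsup$ over a larger family of sequences dominates the $\lim$ along any single sub-family. The only care required is notational: keeping the generalized limit apart from the classical one, and noting that existence of $\triangledown f(\tilde{x})$ upgrades a $\limsup$ to an honest $\lim$ for the restricted quotient, which is precisely what converts the one-sided inequality into the desired membership claim.
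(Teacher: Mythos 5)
Your proof is correct, and it is exactly the standard argument from the cited source: the paper itself states this theorem without proof (it is quoted from Clarke), and the intended justification is precisely your observation that the $\limsup$ defining $f^0(\tilde{x},v)$ dominates the limit along the restricted family $y\equiv\tilde{x}$, $t\downarrow 0$, which by (G\^ateaux) differentiability equals $\langle\triangledown f(\tilde{x}),v\rangle$, so the membership criterion for $\partial f(\tilde{x})$ is met for every $v$. The only point worth flagging is that ``$\triangledown f(\tilde{x})$ exists'' must be read as G\^ateaux differentiability (so that the one-sided quotient converges for \emph{every} direction $v$, not merely along coordinate directions); under that standard reading your argument, including the remark that the Lipschitz hypothesis serves to keep $f^0$ finite, is complete.
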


\end{document}